\DeclareSymbolFont{cyrletters}{OT2}{wncyr}{m}{n}
\DeclareMathSymbol{\Sha}{\mathalpha}{cyrletters}{"58}
\theoremstyle{plain}
\newtheorem*{theorem*}{Theorem}
\newtheorem*{lemma*} {Lemma}
\newtheorem*{corollary*} {Corollary}
\newtheorem*{proposition*} {Proposition}
\newtheorem{theorem}{Theorem}[section]
\newtheorem{lemma}[theorem]{Lemma}
\newtheorem{corollary}[theorem]{Corollary}
\newtheorem{proposition}[theorem]{Proposition}
\theoremstyle{remark}
\newtheorem*{definition}{Definition}
\theoremstyle{definition}
\begin{document}

\title{Deformations of Galois representations and the theorems of Sato-Tate, Lang-Trotter et al}
\author{Aftab Pande}
\date{\today}
\begin{abstract}

 We construct infinitely ramified Galois representations $\rho$ such that the $a_l (\rho)$'s have distributions in contrast to the statements of Sato-Tate, Lang-Trotter and others. Using similar methods we deform a residual Galois representation for number fields and obtain an infinitely ramified representation with very large image, generalising a result of Ramakrishna.
 \end{abstract}
\maketitle

\section{Introduction}

If $E$ is an elliptic curve over $\mathbb{Q}$, then by the Mordell-Weil theorm we know that the set of rational points $E(\mathbb{Q})$ is a finitely generated abelian group. For $l$ a prime, let $\mathbb{F}_l$ be the prime field and define $a_l (E) := l + 1 - \#E(\mathbb{F}_l)$. The integers $a_l (E)$ provide us a lot of information about the connections that exist between elliptic curves, modular forms and Galois representations. 

If $f$ is a classical modular form then it has a fourier expansion $f( \tau) = \sum_n a_n (f) q^n$, where $\tau \in \mathfrak{h}$ and $q = e^{2 \pi i \tau}$. The celebrated Shimura-Taniyama conjecture which said that all elliptic curves are modular could be rephrased as $a_l (E) = a_l (f)$. To each elliptic curve $E$ we can also associate a Galois representation $\rho_E : G \rightarrow GL_2 (\mathbb{Z}_p)$, ramified at finitely  many primes, where $G = Gal(\overline{\mathbb{Q}} / \mathbb{Q}) $. If we define $a_l (\rho_E) := Tr (\rho_E (\sigma_l))$, where $\sigma_l$ is the Frobenius at $l$, then we know that $a_l (\rho_E) = a_l (E)$. Serre conjectured \cite{S3} (now a theorem due to Khare and Wintenberger) that every odd, irreducible Galois representation $\rho$ is modular, which could be rephrased as $a_l (\rho) = a_l (f)$ for a certain modular form $f$ of weight $2$. 

The $a_l (E)$ (for simplicity we denote them $a_l$) have also been studied asymptotically as $l$ varies. Hasse showed that $| a_l| \leq 2 \sqrt{l}$ and we are interested in the following statements. 
 
\begin{itemize}
 \item (Sato-Tate) If $\alpha_l = \frac{a_l}{2\sqrt{l}}$, then the probability distribution function for the $\alpha_l$ is given by  $P(l | A \leq \alpha_l \leq B) = \frac{2}{\pi} \int_A^B \sqrt{1-x^2}dx$.

 \item (Lang-Trotter) $\# \{ l < x | a_l = D \} = O(\frac{\sqrt{x}}{log(x)})$, where $D \in \mathbb{Z}$ is a fixed integer.

\item (Serre and Elkies) $\#\{ l < x | a_l = 0 \} < O(x^{3/4})$.

\item (Serre) $\#\{ l < x | |a_l| = 1 \} < O(\frac{x}{(log x)^{1 + \epsilon}})$, for any $\epsilon < \frac{1}{3}$.

\end{itemize}

The Sato-Tate \cite{T1} conjecture, assuming multiplicative reduction at some prime, was proved recently by Taylor, Clozel, Harris, Shepherd-Barron (\cite{T2}, \cite{HST}) and for an elementary introduction about the significance of the conjecture read \cite{M3}. There is also a recent preprint (\cite{BGHT}) by Taylor, Harris, Geraghty and Barnet-Lamb which seems to have proved a generalized version of the Sato-Tate conjecture. In the case that $a_l = 0$ (also known as supersingular primes), the Lang-Trotter \cite{LT} conjecture assumes that $E$ has no complex multiplication. Serre \cite{S1} showed supersingular primes have density zero using the Cebotarev Density theorem and then proved \cite{S2} the above estimate assuming the generalised Riemann Hypothesis. Elkies \cite{E1} showed that there are infinitely many supersingular primes and later  proved \cite{E2} Serre's  estimate without assuming the GRH. When $a_l = 1$, Mazur refers to the corresponding $l$'s as anomalous primes, and the estimate was established by Serre \cite{S2}.

Our goal is to construct representations $\rho : G \rightarrow GL_2 (\mathbb{Z}_p)$, ramified at infinitely many primes, such that the $a_l  (\rho)$'s have distributions in contrast with the above 4 statements. We obtain the following results:

\begin{itemize}
\item For any $\epsilon \leq 1$, there exists a deformation $\rho : G \rightarrow GL_2 (\mathbb{Z}_p)$ such that the set $R = \{ l | |\frac{a_l(\rho)}{2\sqrt{l}}| \leq \epsilon \}$ is of density one.

\item If $D \in \mathbb{Z}$ is fixed, we can find a deformation $\rho : G \rightarrow GL_2 (\mathbb{Z}_p)$, such that $\# \{ l < x | a_l(\rho) = D \} \ne O(\frac{\sqrt{x}}{log x})$.

\item There exists a deformation $\rho : G \rightarrow GL_2 (\mathbb{Z}_p)$, such that $\# \{l <x | a_l(\rho)= 0 \} \ne O(x^{3/4})$.  

\item There exists a deformation $\rho : G \rightarrow GL_2 (\mathbb{Z}_p)$, such that $\# \{l <x | a_l(\rho) = 1 \} \ne (\frac{x}{(log x)^{1+\epsilon}})$, where $\epsilon < 1/3$.

\end{itemize}

Serre had shown examples of representations ramified at all primes, but in his case the representations were reducible. Ramakrishna \cite{R2} showed the existence of irreducible representations, ramified at infinitely many primes. Infinitely ramified representations do not arise from geometry, but as the limit of geometric representations. They have been useful in showing the independence of the conditions of potentially semistable and finite ramification (as in the Fontaine-Mazur conjecture), and also in constructing representations which are pure or rational as in \cite{KLR}. 

To construct such representations, we make use of techniques which we develop in section 3. In developing these techniques we generalise a result of Ramakrishna \cite{R4} and obtain the following result.

\begin{theorem}
Let $\mathbf{k}$ be a finite field of characteristic $p$, $W(\mathbf{k})$ the ring of Witt vectors of $\mathbf{k}$, $F$ a number field such that $[F(\mu_p):F] = p-1$ and $G_F$ its absolute Galois group Gal$ (\overline{F}/F)$. If $\overline{\rho} : G_F \rightarrow GL_2(\mathbf{k})$ is a representation whose image contains $SL_2(\mathbf{k})$ then there exists a deformation of $\overline{\rho}$, ramified at infinitely many primes, $\rho : G_F \rightarrow GL_2 (W(\mathbf{k})[[T_1, T_2,.., T_r,....,]])$, whose image is full i.e. contains $SL_2 (W(\mathbf{k})[[T_1, T_2,.., T_r, ....,]])$.
\end{theorem}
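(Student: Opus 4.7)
The plan is to construct $\rho$ as the inverse limit of a tower of deformations $\rho_n \colon G_F \to GL_2(R_n)$, where $R_n = W(\mathbf{k})[[T_1,\dots,T_n]]$, each ramified at a finite set $S_n$ of primes with $S_n \subsetneq S_{n+1}$. Taking $\rho = \varprojlim \rho_n$ then yields an infinitely ramified deformation into $GL_2$ of the full power series ring in countably many variables. The inductive step from $\rho_n$ to $\rho_{n+1}$ is the heart of the argument and follows the strategy pioneered by Ramakrishna: given $\rho_n$, one seeks an auxiliary prime $q \notin S_n$ at which to allow new ramification, with a carefully chosen local deformation condition, so that the enlarged deformation problem is unobstructed and the Krull dimension of the universal deformation ring increases by exactly one.

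Concretely, I would use Ramakrishna's ``nice primes'': primes $q$ with $q \not\equiv \pm 1 \pmod p$ such that $\overline{\rho}(\mathrm{Frob}_q)$ has eigenvalues $\alpha, \beta$ satisfying $\alpha/\beta \equiv q \pmod p$. At such a prime there is a smooth one-dimensional local deformation condition accommodating new ramification. To show that suitable primes exist and can be chosen so as to impose the desired global condition, one invokes global Poitou--Tate duality, Wiles' Selmer--dual Selmer formula, and Chebotarev's density theorem to find a Frobenius element that kills a prescribed class in the dual Selmer group, while simultaneously remaining outside $S_n$. The hypotheses $\mathrm{Im}(\overline{\rho}) \supseteq SL_2(\mathbf{k})$ and $[F(\mu_p):F] = p-1$ are used precisely to guarantee the vanishing statements $H^0(G_F,\mathrm{Ad}^0\overline{\rho}) = H^0(G_F,\mathrm{Ad}^0\overline{\rho}(1)) = 0$ and the absolute irreducibility of the adjoint --- the inputs required for the Chebotarev step to succeed and for the unobstructedness at the auxiliary primes.

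The principal obstacle, and the novelty over Ramakrishna's original $F = \mathbb{Q}$ result, is carrying out these cohomological calculations over a general number field. Over $F$ one must handle additional units, the possibly nontrivial class group, and the archimedean places; this is what forces the hypothesis $[F(\mu_p):F] = p-1$, which ensures that the image of $G_F$ in $(\mathbb{Z}/p)^*$ under the cyclotomic character is as large as possible, making the separation of eigenvalues in the Chebotarev argument feasible. Once the tower has been constructed, fullness of the image is comparatively routine: at each finite level, the image of $\rho_n$ modulo the maximal ideal is $\mathrm{Im}(\overline{\rho}) \supseteq SL_2(\mathbf{k})$, and a standard completeness argument (lifting generators of $\mathfrak{sl}_2$ through the appropriate commutator and conjugation computations in $R_n$) shows that $\mathrm{Im}(\rho_n) \supseteq SL_2(R_n)$. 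Passing to the inverse limit then yields $\mathrm{Im}(\rho) \supseteq SL_2(W(\mathbf{k})[[T_1, T_2, \dots]])$, completing the proof.
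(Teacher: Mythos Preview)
Your outline has the right shape---build a tower of deformations by adding nice primes, control Selmer groups via Poitou--Tate and Chebotarev, then argue fullness---but two technical choices diverge from the paper in ways that would leave real gaps.

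First, the target rings: you take $\rho_n$ into the full ring $R_n = W(\mathbf{k})[[T_1,\dots,T_n]]$, which amounts to asserting that the relevant deformation problem is formally smooth at every stage. The paper does \emph{not} prove this. Instead it works with the truncations $R_n/m_{R_n}^n = W(\mathbf{k})[[T_1,\dots,T_n]]/(p,T_1,\dots,T_n)^n$ and, at each step, climbs a finite chain $R_{n+1}/m_{R_{n+1}}^{n+1}\twoheadrightarrow\cdots\twoheadrightarrow U_1\twoheadrightarrow U_0$ by repeatedly adjoining further nice primes (the sets $A$ of Propositions~3.5--3.6) to kill the local obstructions that arise. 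Relatedly, ``an auxiliary prime'' is too optimistic: one already needs a set $B$ of one \emph{or two} nice primes just to bump $\dim H^1_N$ by one (Proposition~3.7), and then the additional primes in $A$; the two-prime phenomenon is exactly the content of Proposition~3.5.

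Second, your fullness argument (``lifting generators of $\mathfrak{sl}_2$ through commutator and conjugation computations'') is not how the paper proceeds, and as stated it is not a proof. The paper invokes Boston's result: a closed subgroup of $SL_2(R)$ surjecting onto $SL_2(R/m^2)$ equals $SL_2(R)$. The work is then to show $\mathrm{Im}\,\rho_n \bmod m_{R_n}^2 \supseteq SL_2(R_n/m_{R_n}^2)$, which is done by counting that the kernel of $SL_2(R_n/m_{R_n}^2)\to SL_2(\mathbf{k})$ contains $n+1$ copies of $X=\mathrm{Ad}^0\overline{\rho}$: $n$ split ones coming from the $T_i$ (dual-number lifts, guaranteed by $\dim H^1_N = n$) and one non-split one from $SL_2(W(\mathbf{k})/p^2)\to SL_2(\mathbf{k})$. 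Finally, ``passing to the inverse limit'' for fullness is not automatic; the paper uses compactness of $G_F$ to extract a convergent subsequence of preimages $g_{n_m}$ and then continuity of each $\rho_n$ to conclude $\rho(g)=\alpha$.
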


\section{Deformation Theory}

We briefly recall some facts of deformation theory, and refer the reader to Mazur \cite{M1} and Ramakrishna \cite{R1}, \cite{R3}, \cite{R4} for more details.

Let $\overline{\rho}  : G \rightarrow GL_2 (\mathbf{k})$ be an absolutely irreducible representation of a profinite group $G$, and let $R$ be a ring in the category $\mathfrak{C}$ of Artinian local rings with residue field $\mathbf{k}$. If $\gamma$ is a lift of $\overline{\rho}$ to $GL_2 (R)$, then two lifts $\gamma_1, \gamma_2$ are equivalent if $\gamma_1 = A \gamma_2 A^{-1}$, where $A$ is congruent to the identity matrix modulo $m_R$, the unique maximal ideal of $R$. A deformation $\gamma$ of $\overline{\rho}$ to $R$ is an equivalence of lifts of $\overline{\rho}$ to $R$. We have the following theorem due to Mazur \cite{M2}:

\begin{theorem}

There is a complete local Noetherian ring $R^{un}$ with residue field $\mathbf{k}$ and a continuous deformation $\widetilde{\rho} : G \rightarrow GL_2 (R^{un})$ such that:

\begin{itemize}

\item Reduction of $\widetilde{\rho}$ modulo the maximal ideal of $R^{un}$ gives $\overline{\rho}$

\item For any ring $R$ in $\mathfrak{C}$ and any deformation $\gamma$ of $\overline{\rho}$ to $GL_2(R)$ there is a unique homomorphism $\phi : R^{un} \rightarrow R$ in $\mathfrak{C}$ such that $\phi \circ  \widetilde{\rho} = \gamma$ as deformations.

\end{itemize}

\end{theorem}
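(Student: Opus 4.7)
The statement is Mazur's existence theorem for the universal deformation ring, so the natural plan is to apply Schlessinger's pro-representability criterion to the deformation functor. Define $D : \mathfrak{C} \rightarrow \text{Sets}$ by letting $D(R)$ be the set of equivalence classes of lifts of $\overline{\rho}$ to $GL_2(R)$, with morphisms induced by push-forward along ring maps. A complete local Noetherian ring $R^{un}$ representing the pro-extension of $D$ to complete local Noetherian $W(\mathbf{k})$-algebras will produce $\widetilde{\rho}$ as the deformation corresponding to $\mathrm{id}_{R^{un}}$, and the universal property in the statement is exactly representability.

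The main work is verifying Schlessinger's axioms (H1)--(H4) for $D$. Given a small extension $R_1 \twoheadrightarrow R_0$ and a map $R_2 \rightarrow R_0$ in $\mathfrak{C}$, I would first show that the natural map
\[
D(R_1 \times_{R_0} R_2) \longrightarrow D(R_1) \times_{D(R_0)} D(R_2)
\]
is surjective: given lifts $\gamma_i$ to $R_i$ whose reductions to $R_0$ agree up to conjugation by some $A \equiv I \pmod{m_{R_0}}$, lift $A$ to $R_1$ (possible because $R_1 \twoheadrightarrow R_0$ is surjective on matrices congruent to $I$) and conjugate $\gamma_1$ so that the two lifts literally agree over $R_0$; then the pair glues to a lift over the fibre product. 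For injectivity when $R_2 = \mathbf{k}[\epsilon]$, and more generally for (H2) and (H4), the crucial input is that absolute irreducibility of $\overline{\rho}$ together with Schur's lemma gives $\mathrm{End}_{\mathbf{k}[G]}(\overline{\rho}) = \mathbf{k}$, so any matrix commuting with a lift and congruent to $I$ is a scalar; this rigidifies equivalence classes and makes two conjugate lifts over a fibre product actually equal after a single simultaneous conjugation.

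The genuinely non-formal step, and the main obstacle to asserting \emph{Noetherian} rather than merely pro-Artinian representability, is axiom (H3): the tangent space $D(\mathbf{k}[\epsilon])$ must be finite-dimensional over $\mathbf{k}$. A standard cocycle computation identifies this tangent space with $H^1(G, \mathrm{Ad}\,\overline{\rho})$, where $\mathrm{Ad}\,\overline{\rho}$ is $M_2(\mathbf{k})$ with the conjugation action. Finiteness of this cohomology is precisely Mazur's $\Phi_p$-condition on $G$, which holds in our intended applications (e.g.\ $G = \mathrm{Gal}(F_S/F)$ for a finite set $S$ of primes of a number field $F$ containing the primes above $p$ and $\infty$) by class field theory and the finiteness of $p$-class groups; this is the hypothesis I would carry along or state separately.

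Having verified (H1)--(H4), Schlessinger's theorem produces a complete local Noetherian $W(\mathbf{k})$-algebra $R^{un}$ together with a compatible system of deformations over each $R^{un}/m^n$; taking the inverse limit yields a continuous $\widetilde{\rho} : G \rightarrow GL_2(R^{un})$ that reduces to $\overline{\rho}$ modulo the maximal ideal. The universal property for a general $R \in \mathfrak{C}$ and a deformation $\gamma$ to $R$ then follows because $\gamma$ factors through $R/m_R^n$ for some $n$, and the induced unique map $R^{un}/m^n \rightarrow R/m_R^n$ assembles into the required $\phi : R^{un} \rightarrow R$ with $\phi \circ \widetilde{\rho} = \gamma$.
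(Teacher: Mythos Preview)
Your outline via Schlessinger's criterion is the standard and correct route to Mazur's theorem, and nothing in it is wrong: the verification of (H1)--(H2) by lifting the conjugating matrix, the use of absolute irreducibility (hence scalar centralizer) for (H4), and the identification of the tangent space with $H^1(G,\mathrm{Ad}\,\overline{\rho})$ together with the $\Phi_p$-finiteness hypothesis for (H3) are exactly the ingredients one needs.

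That said, there is nothing to compare against here: the paper does not prove this theorem at all. It is quoted as a background result due to Mazur (with a reference), stated in the preliminaries section and used as a black box. So your proposal is not an alternative to the paper's argument but rather a supply of the proof the paper omits. If you want to align with the paper's conventions, note that the paper subsequently fixes determinants and works with $\mathrm{Ad}^0\overline{\rho}$ rather than $\mathrm{Ad}\,\overline{\rho}$; for the theorem as stated (no determinant condition) your use of the full adjoint is the right choice.
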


$R^{un}$ is called the universal deformation ring if $\overline{\rho}$ is irreducible (or if the centraliser of the image of $\overline{\rho}$ is exactly the scalars), and the versal ring otherwise. In this paper we fix the determinants of all deformations of $\overline{\rho}$, which means we study the cohomology of $Ad^0 \overline{\rho}$ (the $2 \times 2$ matrices of trace zero, with $G$ acting via conjugation by $\overline{\rho}$) and not $Ad \overline{\rho}$. It is known that $R^{un}$ is a quotient of $W(\mathbf{k}) [[ T_1, T_2,..., T_r]]$, where $r = dim_{\mathbf{k}}H^1(G, Ad^0 \overline{\rho})$. For the rest of the paper, we let $X$ denote $Ad^0 \overline{\rho}$.

Let $G_F$ be the Galois group of a number field $F$, $\rho_n : G_F \rightarrow GL_2 (W (\mathbf{k})/p^n)$ be a deformation of $\overline{\rho}$, and suppose we want to deform $\rho_n$ to $\rho_{n+1} : G_F \rightarrow GL_2 (W (\mathbf{k})/p^{n+1})$. When we fix determinants, the obstructions to deformation lie in $H^2(G_F, X)$. For a given $\overline{\rho}$ and a finite set of primes $S$ containing $p$ and the ramified primes of $\overline{\rho}$, define $\Sha^i_S (X)$ to be the kernel of the map $H^i(G_{F,S}, X) \rightarrow \oplus_{\mathfrak{q} \in S} H^i(G_{\mathfrak{q}}, X)$, where $G_{F,S}$ is the Galois group of the maximal extension of $F$ unramified outside $S$, and $G_{\mathfrak{q}} = Gal (\overline{F}_{\mathfrak{q}}/F_{\mathfrak{q}})$ is the decomposition group at $\mathfrak{q}$. If $\Sha^2_S(X) =  0$, then all the obstructions can be detected locally.

If the obstruction is trivial, and $\rho_{n+1}$ is a deformation of $\rho_n$, then $H^1(G_F, X)$ acts on the set of deformations. The action is given by $(f. \rho_{n+1}) (\sigma) = (I + p^n f(\sigma))(\rho_{n+1}(\sigma))$, where $f \in H^1(G_F, X)$. When $f$ is a coboundary, then $(I + p^n f(\sigma)) \rho_{n+1}(\sigma)$ is the same deformation. If the image of $\overline{\rho}$ is exactly the scalars, then $H^1(G, X)$ acts on the deformations of $\rho_n$ to $GL_2(W(\mathbf{k})/p^{n+1})$ as a principal homogeneous space. Deformations of residual Galois representations exist due to the work of Diamond, Taylor, Ramakrishna \cite{R1}, \cite{R3} and recently, Manoharmayum (\cite{Ma}) for number fields.

\section{The setup}

In this section we generalise some definitions and propositions from \cite{R4} and \cite{KLR} for a number field $F$, such that $[F(\mu_p):F] = p-1$. Since they are required for both sections, we sketch the proofs and refer the reader to the respective sources for more details.

\begin{definition}
Let $T$ be a finite set of primes and $X^*$ be the Cartier dual of $X$. If $L_{\mathfrak{q}} \subset H^1 (G_{\mathfrak{q}}, X)$ is a subspace with annihilator $L^{\bot}_{\mathfrak{q}} \subset H^1 (G_{\mathfrak{q}}, X^*)$ (under the local pairing), for $\mathfrak{q} \in T$, then we define $H^1_{\mathfrak{L} }(G_{F,T}, X)$ and $H^1_{\mathfrak{L^{\bot}} }(G_{F,T}, X^*)$ to be, respectively, the kernels of the restriction maps
\begin{itemize}
\item $H^1 (G_{F,T}, X) \rightarrow \oplus_{\mathfrak{q} \in T} H^1 (G_{\mathfrak{q}}, X) / L_{\mathfrak{q}}$
\item $H^1 (G_{F,T}, X^*) \rightarrow \oplus_{\mathfrak{q} \in T} H^1 (G_{\mathfrak{q}}, X^*) / L^{\bot}_{\mathfrak{q}}$
\end{itemize}
\end{definition}

These cohomology groups are also known as Selmer and dual Selmer groups, and the next proposition about them is due to Wiles \cite{W} (see also \cite{NSW}). We let $T$ be a finite of primes, which contains all primes of $F$ above rational primes dividing $\# X$.

\begin{proposition}
$dim H^1_{\mathfrak{L} }(G_{F,T}, X) - dim H^1_{\mathfrak{L}^{\bot} }(G_{F,T}, X^{*}) = dim H^{0} (G_{F,T}, X) - dim H^{0}(G_{F,T}, X) + \sum_{\mathfrak{q} \in T} (dim L_{\mathfrak{q}} - dim H^{0}(G_{\mathfrak{q}}, X))$
\end{proposition}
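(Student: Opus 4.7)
The plan is to derive this identity as a special case of Wiles' Selmer/dual-Selmer formula (see \cite{W} and \cite{NSW}), whose main ingredients are the Poitou--Tate nine-term global duality sequence, local Tate duality, and the local and global Euler characteristic formulas. Since $F$ is a number field and $T$ contains all primes above $p$ (and, tacitly, the archimedean places), these classical tools are available without modification, so the proof reduces to an alternating-sum calculation with $\mathbf{k}$-dimensions.

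First I would set up the defining sequence
\begin{equation*}
0 \longrightarrow H^1_{\mathfrak{L}}(G_{F,T}, X) \longrightarrow H^1(G_{F,T}, X) \stackrel{\lambda}{\longrightarrow} \bigoplus_{\mathfrak{q}\in T} H^1(G_{\mathfrak{q}}, X)/L_{\mathfrak{q}},
\end{equation*}
and use local Tate duality to identify each quotient $H^1(G_\mathfrak{q}, X)/L_\mathfrak{q}$ with the $\mathbf{k}$-dual of $L_\mathfrak{q}^\perp \subset H^1(G_\mathfrak{q}, X^*)$. Splicing the Poitou--Tate nine-term sequences for $X$ and $X^*$ then yields the exact sequence
\begin{equation*}
\begin{aligned}
0 \to H^1_{\mathfrak{L}}(G_{F,T}, X) &\to H^1(G_{F,T}, X) \to \bigoplus_{\mathfrak{q}\in T} H^1(G_\mathfrak{q}, X)/L_\mathfrak{q} \to H^1_{\mathfrak{L}^\perp}(G_{F,T}, X^*)^\vee \\
&\to H^2(G_{F,T}, X) \to \bigoplus_{\mathfrak{q}\in T} H^2(G_\mathfrak{q}, X) \to H^0(G_{F,T}, X^*)^\vee \to 0.
\end{aligned}
\end{equation*}

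Next I would take the alternating sum of dimensions along this sequence (which vanishes by exactness) and substitute three standard identities: local Tate duality in the form $\dim H^2(G_\mathfrak{q}, X) = \dim H^0(G_\mathfrak{q}, X^*)$; the local Euler characteristic at each $\mathfrak{q}\in T$; and the global Euler characteristic of $G_{F,T}$ acting on $X$. The $\dim H^1(G_{F,T}, X) - \dim H^2(G_{F,T}, X)$ contributions cancel against the analogous local sum up to the Euler characteristic corrections, and what survives is precisely the four quantities displayed on the right-hand side of the proposition. (The statement as printed has an obvious typographical error: the second $H^0(G_{F,T}, X)$ should read $H^0(G_{F,T}, X^*)$.)

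The main obstacle is the careful bookkeeping at the archimedean and $p$-adic primes: the $[F_\mathfrak{q}:\mathbb{Q}_p]\dim X$ contributions from the local Euler characteristic at places above $p$ must cancel against the corresponding global term, and the archimedean contributions $\dim H^0(G_v, X)$ for $v\mid\infty$ must be absorbed into the global side as well. Once this cancellation is discharged in the manner of \cite{NSW}, the proposition is established and available for the applications in the rest of the paper.
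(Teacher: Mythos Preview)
The paper does not actually prove this proposition: it is stated immediately after the sentence ``the next proposition about them is due to Wiles \cite{W} (see also \cite{NSW})'' and no argument is given. Your sketch via the Poitou--Tate nine-term sequence, local Tate duality, and the local/global Euler characteristic formulas is the standard derivation one finds in those references, and it is correct (including your observation that the second $H^0(G_{F,T},X)$ is a typo for $H^0(G_{F,T},X^*)$). So there is nothing to compare: you have supplied the proof that the paper chose to omit by citation.
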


\begin{definition}

Let $q$ be a rational prime which splits completely in $F$ such that $q \not\equiv \pm 1 \mod p$, and $\mathfrak{q}$ be a prime lying above it in $F$. We say that $\mathfrak{q}$ is $\overline{\rho}$-nice if 

\begin{itemize}
\item $\overline{\rho}$ is unramified at $\mathfrak{q}$
\item the ratio of the eigenvalues of $\overline{\rho}(\sigma_{\mathfrak{q}})$ is $q$, where $\sigma_{\mathfrak{q}}$ is Frobenius at $\mathfrak{q}$.
\end{itemize}
\end{definition}

\begin{definition}
Let $R$ be a complete local Noetherian ring with residue field $\mathbf{k}$, and let $J$ be an ideal of finite index in $R$. If $\rho_{R/J}$ is a deformation of $\overline{\rho}$ to $GL_2(R/J)$, then $\mathfrak{q}$ is $\rho_{R/J}$-nice if 
\begin{itemize}
\item $\mathfrak{q}$ is nice for $\overline{\rho}$
\item $\rho_{R/J}$ is unramified at $\mathfrak{q}$ and the ratio of the eigenvalues of $\rho_{R/J}(\sigma_{\mathfrak{q}})$ is $q$.
\item $\rho_{R/J}(\sigma_{\mathfrak{q}})$ has the same (prime to $p$) order as $\overline{\rho}(\sigma_{\mathfrak{q}})$.
\end{itemize}
\end{definition}

Nice primes have useful properties, which are exhibited in the following propositions. 

\begin{proposition}
If $\mathfrak{q}$ is a nice prime, then the dimensions of $H^i (G_{\mathfrak{q}}, X)$ are $1,2,1$ for $i = 0,1,2$, respectively.
\end{proposition}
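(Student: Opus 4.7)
Since the prime $\mathfrak{q}$ is unramified for $\overline{\rho}$ and $\mathbf{k}$ has characteristic $p \neq q$, the module $X$ is unramified at $\mathfrak{q}$ and its $G_{\mathfrak{q}}$-cohomology is controlled entirely by the action of Frobenius $\sigma_{\mathfrak{q}}$. The plan is first to compute $H^0$ by diagonalising $\overline{\rho}(\sigma_{\mathfrak{q}})$ and reading off the eigenvalues on $X=Ad^0\overline{\rho}$, then to obtain $H^2$ from $H^0$ of the Cartier dual by local Tate duality, and finally to get $H^1$ from the local Euler characteristic formula (which is especially clean because $\mathfrak{q}$ does not lie above $p$).

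For $H^0$, the niceness of $\mathfrak{q}$ lets me write $\overline{\rho}(\sigma_{\mathfrak{q}})$ (up to conjugation) as $\mathrm{diag}(\alpha,\alpha q)$. A direct calculation on the basis $\{H,E,F\}$ of trace-zero matrices shows that $\sigma_{\mathfrak{q}}$ acts on $X$ with eigenvalues $1, q, q^{-1}$. Since $q\not\equiv\pm 1\pmod p$, these three eigenvalues are pairwise distinct in $\mathbf{k}$, and exactly one of them equals $1$. Hence $H^0(G_{\mathfrak{q}},X)=X^{\sigma_{\mathfrak{q}}=1}$ is one-dimensional.

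For $H^2$, I invoke local Tate duality, which identifies $H^2(G_{\mathfrak{q}},X)$ with the $\mathbf{k}$-dual of $H^0(G_{\mathfrak{q}},X^{*})$ where $X^{*}=\mathrm{Hom}(X,\mu_p)=X\otimes\chi_{\mathrm{cyc}}$. Because $\mathfrak{q}$ splits completely in $F$, the cyclotomic character sends $\sigma_{\mathfrak{q}}$ to $q\in\mathbf{k}^{\times}$, so $\sigma_{\mathfrak{q}}$ acts on $X^{*}$ with eigenvalues $q, q^{2}, 1$. Again by $q\not\equiv\pm 1\pmod p$, exactly one of these equals $1$, giving $\dim H^0(G_{\mathfrak{q}},X^{*})=1$ and hence $\dim H^2(G_{\mathfrak{q}},X)=1$.

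For $H^1$, I use the local Euler characteristic formula. Since $\mathfrak{q}\nmid p$ (which follows immediately from $q\not\equiv\pm 1\pmod p$, in particular $q\neq p$), the formula reads
\[
\dim H^0(G_{\mathfrak{q}},X)-\dim H^1(G_{\mathfrak{q}},X)+\dim H^2(G_{\mathfrak{q}},X)=0,
\]
so $\dim H^1(G_{\mathfrak{q}},X)=1+1=2$, completing the proof. The only place that really uses niceness is the eigenvalue computation in the $H^0$ step; all three dimensions are forced from that single input via duality and the Euler characteristic, so there is no serious obstacle beyond bookkeeping.
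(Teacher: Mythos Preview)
Your proof is correct and follows essentially the same approach as the paper: the paper phrases the eigenvalue computation as the decomposition $X \cong \mathbf{k} \oplus \mathbf{k}(1) \oplus \mathbf{k}(-1)$ (and $X^* \cong \mathbf{k}(1) \oplus \mathbf{k} \oplus \mathbf{k}(2)$) as $G_{\mathfrak{q}}$-modules, which is exactly your statement that Frobenius acts with eigenvalues $1, q, q^{-1}$ (respectively $q, 1, q^2$), and then invokes local duality and the local Euler--Poincar\'e characteristic just as you do.
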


\begin{proof}
If $\mathfrak{q}$ is a nice prime, we see that as $G_{\mathfrak{q}}$-modules $X \cong \mathbf{k} \oplus \mathbf{k}(1) \oplus \mathbf{k}(-1)$, and $X^* \cong \mathbf{k}(1) \oplus \mathbf{k} \oplus \mathbf{k}(2)$, where $\mathbf{k}(r)$ accounts for the twist by the $r$-th power of the cyclotomic character. We assumed that $F$ doesn't contain the $p$-th roots of unity and $q \not\equiv \pm 1 \mod p$, so $H^i (G_{\mathfrak{q}}, \mathbf{k}(r)) = 0$ for $r\ne 0,1$ by local duality. The rest follows by using the local Euler-Poincare characteristic.
\end{proof}

\begin{proposition}
a) Nice primes have positive density.

b) We can find a finite set of nice primes $Q$ such that $\Sha^{i}_{S \cup Q}(X) = 0$ for $i = 1,2$.

c) Assume that $\Sha^{2}_T(X) = 0$. For any nice prime $\beta \notin T$, the inflation map $H^1(G_{F,T}, X) \rightarrow H^1 (G_{F,T \cup \{\beta \}}, X)$ has one dimensional cokernel.

\end{proposition}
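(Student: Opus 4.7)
The plan is to treat the three parts in turn, using the Wiles--Greenberg formula from Proposition 3.2, the nice-prime local cohomology of Proposition 3.4, and Chebotarev density. For part (a), the strategy is to invoke Chebotarev in the compositum $L = F(\mu_p) \cdot F^{\ker \overline{\rho}}$. The four niceness conditions translate into a constraint on the Frobenius conjugacy class in $\mathrm{Gal}(L/F)$: trivial image on $F$ (so $q$ splits completely), image in $\mathrm{Gal}(F(\mu_p)/F) \cong (\mathbb{Z}/p)^\times$ different from $\pm 1$, and image in $\mathrm{Im}(\overline{\rho})$ a semisimple matrix whose eigenvalue ratio matches the chosen cyclotomic value. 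Since $[F(\mu_p):F] = p-1$ is coprime to $p$ while $SL_2(\mathbf{k}) \subseteq \mathrm{Im}(\overline{\rho})$ has order divisible by $p$, the two sub-extensions are linearly disjoint over $F$; the prescribed conjugacy class is non-empty and Chebotarev yields positive density.

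For part (b), I would exploit Poitou--Tate duality $\Sha^2_S(X) \cong \Sha^1_S(X^*)^\vee$ to reduce the task to killing $\Sha^1_S(X)$ and $\Sha^1_S(X^*)$, both finite-dimensional. Given a non-zero class $c$, the aim is to produce a nice prime $\mathfrak{q} \notin S$ with $c|_{G_\mathfrak{q}} \neq 0$; then $c$ falls outside $\Sha^1_{S \cup \{\mathfrak{q}\}}$, strictly shrinking Sha. Such $\mathfrak{q}$ is found by Chebotarev applied to the enlarged extension $L \cdot F(c)$, where $F(c)$ is the field cut out by the cocycle: the niceness conditions and the non-vanishing of $c|_{G_\mathfrak{q}}$ amount to a Frobenius landing in a non-empty conjugacy class, provided one can verify linear disjointness of $F(c)$ from $L$ over an appropriate sub-extension. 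Iterating finitely many times produces $Q$. The principal obstacle is precisely this linear disjointness analysis, which hinges on the hypothesis $[F(\mu_p):F] = p-1$ and on the richness of $\mathrm{Im}(\overline{\rho})$.

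For part (c), the plan is to apply Proposition 3.2 to two Selmer structures on $T \cup \{\beta\}$ and subtract. Let $\mathcal{L}$ be defined by $L_q = H^1(G_q, X)$ for $q \in T$ together with $L_\beta = H^1_{ur}(G_\beta, X)$, which realizes $H^1(G_{F, T}, X)$ via inflation; let $\mathcal{L}'$ enlarge only $L_\beta$ to all of $H^1(G_\beta, X)$, realizing $H^1(G_{F, T \cup \{\beta\}}, X)$. Only the $\beta$-contribution to the Wiles formula changes, and by Proposition 3.4 it contributes $\dim L'_\beta - \dim L_\beta = 2 - 1 = 1$. The associated dual Selmer groups $H^1_{\mathcal{L}^\perp}(G_{F, T \cup \{\beta\}}, X^*)$ and $H^1_{\mathcal{L}'^\perp}(G_{F, T \cup \{\beta\}}, X^*)$ both vanish: a class in either is unramified at $\beta$, hence inflated from $G_{F, T}$, and is locally trivial at each $q \in T$, placing it in $\Sha^1_T(X^*) \cong \Sha^2_T(X)^\vee = 0$ by hypothesis. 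Therefore $\dim H^1_{\mathcal{L}'} - \dim H^1_{\mathcal{L}} = 1$, and since the inflation map is injective by inflation--restriction, the cokernel has dimension exactly one.
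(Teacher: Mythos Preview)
Your arguments for (b) and (c) are sound and match the paper's approach: for (b) you reduce via Poitou--Tate duality to shrinking $\Sha^1_S(X)$ and $\Sha^1_S(X^*)$ and then use Chebotarev to find nice primes where a given cocycle is non-trivial; for (c) you compare two Selmer structures differing only at $\beta$ and invoke the Wiles formula together with the vanishing of the dual Selmer group (which you correctly deduce from $\Sha^2_T(X)=0$). These are essentially the paper's proofs.

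Part (a), however, has a genuine gap. Your assertion that $F(\mu_p)$ and $F^{\ker\overline{\rho}}$ are linearly disjoint over $F$ is false in the cases of interest. The argument ``$[F(\mu_p):F]=p-1$ is prime to $p$ while $|SL_2(\mathbf{k})|$ is divisible by $p$'' does not force disjointness: $\mathrm{Im}\,\overline{\rho}\subseteq GL_2(\mathbf{k})$ has the abelian quotient $\mathbf{k}^*$ via the determinant, and when $\det\overline{\rho}$ is (a power of) the mod-$p$ cyclotomic character---exactly the situation for $\overline{\rho}$ coming from weight-two modular forms as in Section~5---the field $F(\mu_p)$ is contained in $F(\overline{\rho})$, so the intersection is all of $F(\mu_p)$, not $F$. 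The compatibility between the cyclotomic condition $q\not\equiv\pm 1\pmod p$ and the eigenvalue-ratio condition on $\overline{\rho}(\sigma_{\mathfrak q})$ is therefore not automatic and must be arranged by hand.

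The paper handles this by passing to the adjoint field $F(X)$ (so that scalars are already quotiented out), proving that the intersection $D=F(X)\cap F(\mu_p)$ satisfies $[D:F]\le 2$ (using that $\mathrm{Im}\,\overline{\rho}/(Z\cdot SL_2(\mathbf{k}))$ injects into $\mathbf{k}^*/(\mathbf{k}^*)^2$), and then exhibiting an explicit compatible element of $\mathrm{Gal}(F(X)/D)\times\mathrm{Gal}(F(\mu_p)/D)\cong\mathrm{Gal}(K/D)$: namely the projection of $\left(\begin{smallmatrix} x & 0 \\ 0 & x^{-1}\end{smallmatrix}\right)$ paired with $x^2$, for $x\in\mathbb{F}_p^*$ with $x^2\neq\pm 1$. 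You need an argument of this shape; the bare disjointness claim cannot be salvaged.
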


\begin{proof}
a) Let $F(\overline{\rho})$ be the fixed field of the kernel of $\overline{\rho}$, $F(X)$ be the field fixed by the action of $G_F$ on $X = Ad^0 \overline{\rho}$, $K$ be the composite of $F(X)$ and $F(\mu_p)$, and $D$ be the intersection of the two fields.  To show that nice primes have positive density, we need to use Cebotarev's theorem and find the right conjugacy class associated to nice primes in $Gal(K/F)$. Note that $Gal(F(X)/D) \times Gal (F (\mu_p)/D) \cong Gal(K/D) \subseteq Gal (K/F)$, so we will define $C$ to be the conjugacy class of $a \times b \in Gal(F(X)/D) \times Gal (F (\mu_p)/D)$, where $a$ will correspond to nice primes.

Claim: $[D:F]=1$ or $2$.

As $D/F$ is abelian, the commutator subgroup of $Gal(F(\mu_p)/F)$, which is simply $SL_2(\bf{k})$, is contained in $Gal(F(\overline{\rho})/D)$. Since $D \subseteq F(X)$, the fixed field associated to $\overline{\rho}$, we see that $Gal(F(\overline{\rho})/D)$ contains the scalar matrices $Z$ in $Im \overline{\rho}$, so $Z.SL_2(\bf{k})$ is a normal subgroup of $Gal(F(\overline{\rho})/D)$. As $Im \overline{\rho}/SL_2(\bf{k})$ is cyclic, we see that $[Im \overline{\rho}: Z. SL_2 (\bf{k})]=$ $1$ or $2$. By definition $Gal(F(\overline{\rho})/F) = Im \overline{\rho}$, and $Z.SL_2(\mathbf{k}) \subseteq Gal(F(\overline{\rho})/D)$  so $[D:F]=1$ or $2$, which proves the claim.

Assume $p \geq 7$, and then choose $x \in \mathbb{F}_p^*$ such at $x^2 \ne \pm 1$. Consider $x^2$ in $Gal (F(\mu_p) / F)$. As $[D:F] = 1$ or $2$, we can see that $x^2 \in Gal (F(\mu_p) / D)$. $x^2$ will correspond to $b$ in our desired conjugacy class $C$. Let $\left(
                  \begin{array}{cc}
                    x & 0 \\
                    0 & x^{-1} \\
                  \end{array}
                \right) \in Gal(F(\overline{\rho})/D)$ and consider its projection $a$ in $Gal(F(X)/D)$. Then, $a$ corresponds to an element in the image of $\overline{\rho}$ whose eigenvalues have ratio $t \ne \pm 1 \in \mathbb{F}_p^*$

 Define $\alpha = a \times b \in Gal(F(X)/D) \times Gal (F (\mu_p)/D) \cong Gal(K/D) \subset Gal(K/F)$ and let $C$ be the conjugacy class of $\alpha$. By Cebotarev, we see that nice primes have positive density.

b) For the second part, we need to find a set of nice primes $Q = \{ \mathfrak{q}_1,.., \mathfrak{q}_m \}$ such that $f_i |_{G_\mathfrak{q_i}} \ne 0$ and $
g_j |_{G_\mathfrak{q_j}} \ne 0$ and $0$ otherwise, for any set of linearly indepedent $\{f_1,...,f_r\} \in \Sha^{1}_S(X)$ and $\{g_1,..,g_s \} \in \Sha^{1}_S(X^*)$. Let $K_{f_i}$ and $K_{g_j}$  be the fixed fields of the kernels of the restrictions of $f_i, g_j$ to $Gal(\overline{K}/K)$. Both these fields are linearly disjoint over $K$, so to find a nice prime $\mathfrak{q}$ which satisfies the required properties we need to make sure that the primes lying above it do not split completely from $K$ to $K_{f_i}$, but split completely from $K$ to $K_{f_j}$ for $i \ne j$ (similarly for $K_{g_j}$).
Such primes can be found because of the linear disjointedness over $K$ of $K_{f_i}$ and $K_{g_j}$. Now we choose a basis $<f_i>_{i \in I}$ of $\Sha^{1}_S(X)$ and $<g_j>_{j \in J}$ of $\Sha^{1}_S(X^*)$ and sets of primes $\{\mathfrak{q_i}\}_{i \in I}$ and $\{\mathfrak{p_j}\}_{j \in J}$ as above, so that $f_i (\sigma_{\mathfrak{q_i}}) \ne 0$ and $0$ otherwise (similarly for the $g_j$'s). Let $T = \{\mathfrak{q_i}, \mathfrak{p_j} \}_{i \in I, j \in J}$, and we see that $\Sha^{1}_{S \cup T}(X) = 0$ and $\Sha^{1}_{S \cup T}(X^*) = 0$. By duality, $\Sha^2_{S \cup T}(X) = 0$.

c) Let $L_{\mathfrak{q}} = H^{1}(G_{\mathfrak{q}}, X)$ for all $\mathfrak{q} \in T \cup \beta$. Using Prop 3.1, we see that the LHS is simply $H^{1}(G_F,T)$, and the RHS changes by $dim H^{1}(G_{\mathfrak{q}}, X) - dim H^{0}(G_{\mathfrak{q}}, X) = dim H^{2}(G_{\mathfrak{q}}, X)$. Using Prop 3.2, we see that for a nice prime $\mathfrak{q}$, $dim H^{2}(G_{\mathfrak{q}}, X) = 1$. 
\end{proof}

Now we define some deformation conditions.

\begin{definition}
Let $N_{\mathfrak{q}} = H^1 (G_{\mathfrak{q}}, \mathbf{k} (1)) \subset H^1 (G_{\mathfrak{q}}, X)$, and its annihilator $N^{\bot}_{\mathfrak{q}} = H^1 (G_{\mathfrak{q}}, \mathbf{k}(1)) \subset H^1 (G_{\mathfrak{q}}, X^*)$. We let $C_{\mathfrak{q}}$ be the class of deformations of $\overline{\rho}$ which are preserved by the action of $N_{\mathfrak{q}}$.
\end{definition}

The class of deformations $C_{\mathfrak{q}}$ correspond to the behaviour of the $\rho$'s when restricted to nice primes. This class of deformations behaves nicely and can be lifted to the next level easily. $N_{\mathfrak{q}}$ is called the set of null cohomology classes. When we act on the deformations by cohomology classes, the set $N_{\mathfrak{q}}$ is the set of classes which when restricted to nice primes leave the deformation untouched, as there are no obstructions to lifting at those primes. If there are obstructions, we will choose a cohomology class which is not in $N_{\mathfrak{q}}$ to overcome the obstructions. See \cite{R3} for more details on both $C_{\mathfrak{q}}$ and $N_{\mathfrak{q}}$.

Assume that $T$ is large enough so that $\Sha_T^2 (X) = 0$. Consider the restriction maps $\Psi_T : H^1 (G_{F,T}, X) \rightarrow \oplus_{\mathfrak{q} \in T} H^1 (G_{\mathfrak{q}}, X)$ and
$\Psi^*_T : H^1 (G_{F,T}, X^*) \rightarrow \oplus_{\mathfrak{q} \in T} H^1 (G_{\mathfrak{q}}, X^*)$. By global duality, we know that the images of $\Psi$ and $\Psi^*$ are exact annihilators of each other under the local pairing. This is also called the annihilation property.

\begin{definition}
(Local condition property) Assume that $T$ is large enough so that $\Sha^2_T (X) = 0$. Let $(z_{\mathfrak{q}})_{\mathfrak{q} \in T} \in \oplus_{\mathfrak{q} \in T} H^1 (G_{\mathfrak{q}}, X)$, such that $(z_{\mathfrak{q}})_{\mathfrak{q} \in T} \notin Im \psi_T$. For $\beta$ a nice prime, $h^{\beta} \in H^1 (G_{F, T \cup \{ \beta \}}, X)$ is a solution to the local condition problem if $h^{\beta} |_{G_{\mathfrak{q}}} = z_{\mathfrak{q}}$ for all $\mathfrak{q} \in T$.
\end{definition}

Remark: It is not hard to see that if $(z_{\mathfrak{q}})_{\mathfrak{q}} \notin Im  \psi_T$, then $\exists$ $\zeta \in H^1 (G_{F,T}, X^*)$ such that $\psi_T^* (\zeta)$ does not annihilate $(z_{\mathfrak{q}})_{\mathfrak{q} \in T}$.

\begin{proposition}
 Let $\rho_{R/J}$ be given, and $(z_{\mathfrak{q}})_{\mathfrak{q} \in T}$, and $\zeta$ be as above. Choose a basis $<\zeta_1, ..., \zeta_s>$ of $\psi_T^{* -1} (Ann (z_{\mathfrak{q}})_{\mathfrak{q} \in T})$. Let $Q$ be the set of nice primes such that 

\begin{itemize}
 \item $\zeta_i |_{G_{\mathfrak{q}}} = 0$.
\item $\zeta |_{G_{\mathfrak{q}}} \ne 0$
\item $f \in H^1 (G_{F,T}, X) \Rightarrow f|_{G_{\mathfrak{q}}} = 0$
\end{itemize}

Then, for any $\beta \in Q$ $\exists$ $h^{\beta} \in H^1 (G_{F,T \cup \{\beta\}}, X)$ which solves the local condition property.

\end{proposition}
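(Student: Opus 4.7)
The natural strategy is to invoke global (Poitou--Tate) duality for the enlarged set of primes $T' = T \cup \{\beta\}$: the image of $\psi_{T'}: H^1(G_{F,T'}, X) \to \bigoplus_{\mathfrak{q} \in T'} H^1(G_\mathfrak{q}, X)$ is the exact annihilator of the image of $\psi_{T'}^*$ under the sum of local Tate pairings. Thus finding $h^\beta$ solving the local condition problem is equivalent to choosing $z_\beta \in H^1(G_\beta, X)$ such that the augmented tuple $((z_\mathfrak{q})_{\mathfrak{q} \in T}, z_\beta)$ pairs to zero with $\psi_{T'}^*(\eta')$ for every $\eta' \in H^1(G_{F,T'}, X^*)$.

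The first step is to describe $H^1(G_{F,T'}, X^*)$ explicitly. Applying the argument of Prop 3.3(c) to $X^*$ (the local dimensions at a nice prime are still $1,2,1$, as $X^* \cong \mathbf{k}(1) \oplus \mathbf{k} \oplus \mathbf{k}(2)$ in the proof of Prop 3.2), the inflation map $H^1(G_{F,T}, X^*) \hookrightarrow H^1(G_{F,T'}, X^*)$ has image equal to the classes unramified at $\beta$, and its cokernel is one-dimensional, spanned by some $\xi$ whose restriction $\xi|_{G_\beta}$ is a nonzero ramified class in $H^1(G_\beta, X^*)$. Since $\{\zeta_1,\ldots,\zeta_s,\zeta\}$ is a basis of $H^1(G_{F,T}, X^*)$, adjoining $\xi$ gives a basis of $H^1(G_{F,T'}, X^*)$; it suffices to verify the annihilation condition on these basis elements.

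For $\eta' = \zeta_i$, the $T$-sum $\sum_{\mathfrak{q} \in T} \langle z_\mathfrak{q}, \zeta_i|_{G_\mathfrak{q}}\rangle$ vanishes by the defining property $\zeta_i \in \psi_T^{*-1}(\mathrm{Ann}(z_\mathfrak{q}))$, and the local term at $\beta$ vanishes by condition (i) of the definition of $Q$. Hence this constraint holds for every $z_\beta$. For $\eta' = \zeta$ and $\eta' = \xi$, the constraints reduce to two linear conditions
\[
\langle z_\beta, \zeta|_{G_\beta}\rangle = -\sum_{\mathfrak{q} \in T}\langle z_\mathfrak{q}, \zeta|_{G_\mathfrak{q}}\rangle, \qquad \langle z_\beta, \xi|_{G_\beta}\rangle = -\sum_{\mathfrak{q} \in T}\langle z_\mathfrak{q}, \xi|_{G_\mathfrak{q}}\rangle,
\]
with the first right-hand side nonzero by the choice of $\zeta$.

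The crux is to show these two linear conditions are simultaneously solvable. Equivalently, the map
\[
H^1(G_\beta, X) \longrightarrow \mathbf{k}^2, \qquad z_\beta \longmapsto \bigl(\langle z_\beta, \zeta|_{G_\beta}\rangle,\; \langle z_\beta, \xi|_{G_\beta}\rangle\bigr),
\]
from a $2$-dimensional space to $\mathbf{k}^2$, must be surjective. For this I would argue that $\zeta|_{G_\beta}$ and $\xi|_{G_\beta}$ are linearly independent in the $2$-dimensional space $H^1(G_\beta, X^*)$: the former is unramified and nonzero by condition (ii) on $Q$, while the latter is ramified and nonzero by construction, and the unramified subspace of $H^1(G_\beta, X^*)$ is $1$-dimensional. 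Nondegeneracy of the local Tate pairing then forces the displayed map to be an isomorphism, and solving produces the desired $z_\beta$; Poitou--Tate duality delivers $h^\beta$. I expect the subtle point to be precisely the cokernel computation $H^1(G_{F,T}, X^*) \hookrightarrow H^1(G_{F,T'}, X^*)$ --- i.e.\ transferring Prop 3.3(c) to the dual module and identifying the image with unramified-at-$\beta$ classes; condition (iii) in the definition of $Q$ is not used for existence here, but is included to guarantee that $h^\beta$ has the correct behaviour at $\beta$ when it is later used to perturb the given deformation $\rho_{R/J}$.
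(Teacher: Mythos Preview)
Your approach is correct and takes a genuinely different route from the paper. The paper works on the $X$-side: it first uses the Selmer-group equality $H^1_{\mathfrak{L}}(G_{F,T},X)=H^1_{\mathfrak{L}}(G_{F,T\cup\{\beta\}},X)$ (with $\mathfrak{L}_{\mathfrak{q}}=0$ for $\mathfrak{q}\in T$, $\mathfrak{L}_\beta=H^1(G_\beta,X)$), picks any $g$ spanning the one-dimensional cokernel of inflation $H^1(G_{F,T},X)\hookrightarrow H^1(G_{F,T\cup\{\beta\}},X)$, and concludes from the Selmer equality that $(g|_{G_\mathfrak{q}})_{\mathfrak{q}\in T}\notin\mathrm{Im}\,\psi_T$. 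Global reciprocity together with conditions (i) and (ii) then produces a scalar $c=a/b$ for which $(g|_{G_\mathfrak{q}}-cz_\mathfrak{q})_{\mathfrak{q}\in T}$ is annihilated by all of $H^1(G_{F,T},X^*)$; the annihilation property over $T$ (not $T'$) realises this tuple as $\psi_T(k)$, and one sets $h^\beta=c^{-1}(g-k)$. You instead work on the $X^*$-side and use the annihilation property over $T'$ directly, reducing existence to a solvable $2\times2$ linear system for $z_\beta$ via the unramified/ramified splitting of $H^1(G_\beta,X^*)$. Your route is more conceptual and makes the role of the local pairing at $\beta$ transparent; the paper's is more hands-on and yields an explicit formula for $h^\beta$.

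One small correction: your appeal to ``the argument of Prop~3.3(c) for $X^*$'' to get a one-dimensional cokernel formally requires $\Sha^2_T(X^*)=0$, equivalently $\Sha^1_T(X)=0$, which the standing hypotheses do not include. The cokernel is always at most one-dimensional, and it is precisely condition (iii) that forces equality (it gives $\Sha^1_T(X)=\Sha^1_{T'}(X)$ in the Wiles-formula comparison); so your closing remark that (iii) is unused is not quite right for the argument as you have written it. That said, your underlying idea survives without knowing the cokernel dimension: one only needs that whenever $\eta'\in H^1(G_{F,T'},X^*)$ has $\eta'|_{G_\beta}=0$ the sum $\sum_{\mathfrak{q}\in T}\langle z_\mathfrak{q},\eta'|_{G_\mathfrak{q}}\rangle$ vanishes, and this follows from (i) and (ii) alone since such an $\eta'$ lies in $\langle\zeta_1,\dots,\zeta_s\rangle$.
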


\begin{proof}
 
Following Prop 3.4 of \cite{R4}, we see that $H^1_{\mathfrak{L}} (G_{F,T}, X)= H^1_{\mathfrak{L}}(G_{F,T \cup \{\beta\}}, X)$, where $\mathfrak{L_{\mathfrak{q}}} = 0$ for $\mathfrak{q} \in T$ and $\mathfrak{L_{\beta}} = H^1 (G_{\beta}, X)$ for $\beta \in Q$. 

From Prop 3.3, we see that the inflation map from $H^1 (G_{F,T}, X) \rightarrow H^1 (G_{F,T \cup \{\beta\}}, X)$ has a cokernel of dimension one, so $\exists$ $g \in H^1 (G_{F,T \cup \{\beta\}}, X)$ such that $(g |_{G_{\mathfrak{q}}})_{\mathfrak{q} \in T} \notin Im \psi_T$. We will modify this $g$ to get our required $h^{\beta}$.

Using the global reciprocity law, we have that $\sum_{\mathfrak{q} \in {T \cup \{ \beta \}}} inv_{\mathfrak{q}} (\zeta_i \cup g) = 0$ for any $i$. As $\zeta_i |_{G_{\beta}} = 0$, so $\sum_{\mathfrak{q} \in T} inv_{\mathfrak{q}} (\zeta_i \cup g) = 0$. We also chose $\zeta_i$ to be the annihilators of $z_{\mathfrak{q}}$, so $\sum_{\mathfrak{q} \in T} inv_{\mathfrak{q}} (\zeta_i \cup z_{\mathfrak{q}}) = 0$. Thus $\sum_{\mathfrak{q} \in T} inv_{\mathfrak{q}} (\zeta_i \cup (g - cz_{\mathfrak{q}})) = 0$, for any scalar $c$.

By the remark preceding this proposition, we know that $\sum_{\mathfrak{q} \in T} inv_{\mathfrak{q}} (\zeta \cup z_{\mathfrak{q}}) = b \ne 0$. Now we need to show that $\sum_{\mathfrak{q} \in T} inv_{\mathfrak{q}} (\zeta \cup g) = a \ne 0$. If it was zero, the local annihilation property implies that $(g |_{G_{\mathfrak{q}}})_{\mathfrak{q} \in T} \in Im \psi_T$, which contradicts the choice of $g$. So, $\sum_{\mathfrak{q} \in T} inv_{\mathfrak{q}} (\zeta \cup (g - \frac{a}{b} z_{\mathfrak{q}})) = 0$, which means that $\zeta$ annihilates $(g - \frac{a}{b}z_{\mathfrak{q}})_{\mathfrak{q} \in T}$.

If we choose $c = a/b$, then every element of $<\zeta_1, .., \zeta_s, \zeta>$ (which is a basis of $H^1 (G_{F,T}, X^*)$) annihilates $(g - \frac{a}{b} z_{\mathfrak{q}})_{\mathfrak{q} \in T} \Rightarrow \exists k \in H^1 (G_{F,T}, X)$ such that $\psi_T (k) = (g - \frac{a}{b}z_{\mathfrak
{q}})_{\mathfrak{q} \in T}$. Finally, set $h^{\beta} = \frac{b}{a}(g - k)$ which solves the local condition property.

\end{proof}

\begin{proposition}
Let $\rho_{R/J}, Q, h^{\beta}$ be as in the previous proposition. Then there exists a $\beta \in Q$ such that $h^{\beta} |_{G_{\beta}} \in N_{\beta}$ or there exist $\beta_1, \beta_2 \in Q$ and $h = \alpha_1 h^{\beta_1} + \alpha_2 h^{\beta_2}$ such that $h |_{\beta_i}  \in N_{\beta_i}$ for $i=1,2$ and $h$ solves the local condition property.
\end{proposition}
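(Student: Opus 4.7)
The plan is to reduce the desired conclusion to a two-variable linear system over $\mathbf{k}$ and then solve it by extending the Chebotarev arguments of Proposition 3.5 and Proposition 3.6. By Proposition 3.4 applied at a nice prime $\beta$, the cohomology $H^1(G_\beta, X)$ admits a canonical decomposition
$$H^1(G_\beta, X) = H^1_{\mathrm{ur}}(G_\beta, X) \oplus N_\beta,$$
in which both summands are one dimensional: the first corresponds to the $\mathbf{k}$-summand in $X \cong \mathbf{k} \oplus \mathbf{k}(1) \oplus \mathbf{k}(-1)$, and $N_\beta = H^1(G_\beta, \mathbf{k}(1))$ is the ramified line. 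First I would attach to each $\beta \in Q$ the scalar $\phi(\beta) \in \mathbf{k}$ obtained by projecting $h^\beta|_{G_\beta}$ onto the unramified summand; the condition $h^\beta|_{G_\beta} \in N_\beta$ is simply $\phi(\beta) = 0$. If some $\beta \in Q$ satisfies $\phi(\beta) = 0$, the first alternative of the statement holds, so I would proceed under the assumption that $\phi(\beta) \ne 0$ for every $\beta \in Q$.

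Next I would fix an arbitrary $\beta_1 \in Q$ and seek $\beta_2 \in Q$ together with $\alpha_1, \alpha_2 \in \mathbf{k}$ with $\alpha_1 + \alpha_2 = 1$ so that $h = \alpha_1 h^{\beta_1} + \alpha_2 h^{\beta_2}$ satisfies $h|_{G_{\beta_i}} \in N_{\beta_i}$ for $i=1,2$. The normalization $\alpha_1 + \alpha_2 = 1$ is exactly what preserves the local-condition equations at $T$, since $h^{\beta_j}|_{G_\mathfrak{q}} = z_\mathfrak{q}$ for each $\mathfrak{q} \in T$ and $j=1,2$. Because $h^{\beta_j}$ is unramified at $\beta_i$ whenever $i \ne j$, its restriction $h^{\beta_j}|_{G_{\beta_i}}$ lies in the unramified line, and projects to a scalar $a_{ij} \in \mathbf{k}$ modulo $N_{\beta_i}$. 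Writing $\phi_i = \phi(\beta_i)$, the two vanishing conditions at $\beta_1$ and $\beta_2$ become the linear system
\begin{align*}
\alpha_1 \phi_1 + \alpha_2 a_{12} &= 0,\\
\alpha_1 a_{21} + \alpha_2 \phi_2 &= 0,
\end{align*}
which has a non-zero solution with $\alpha_1 + \alpha_2 \ne 0$ (and then rescales to $\alpha_1 + \alpha_2 = 1$) provided the determinant $\phi_1 \phi_2 - a_{12} a_{21}$ vanishes while the null vector is not proportional to $(1, -1)$.

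The main obstacle will be producing a $\beta_2$ that realizes this relation. Both $\phi_2$ and $a_{21}$ are controlled by the Frobenius at $\beta_2$: the former through the nice-prime Frobenius data entering the construction of $h^{\beta_2}$ in Proposition 3.6, the latter as the value of the cocycle $h^{\beta_1}$ on $\sigma_{\beta_2}$ modulo $(\sigma_{\beta_2}-1)X$. The plan is to enlarge the Chebotarev framework of Proposition 3.5(b) and Proposition 3.6 by the fixed field $K_{h^{\beta_1}}$ cut out by $h^{\beta_1}$. The crux, and the delicate step, is verifying that $K_{h^{\beta_1}}$ is linearly disjoint over $K$ from the auxiliary fields already in play; this should follow from the fact that $h^{\beta_1}$ represents a non-trivial class in the cokernel appearing in Proposition 3.5(c), paralleling the disjointness argument already used inside Proposition 3.5(b). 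Once the disjointness is in hand, Chebotarev furnishes a positive-density set of $\beta_2$ realizing any prescribed combination of Frobenius classes in the composite field, in particular one forcing the determinant to vanish while avoiding the degenerate null direction $(1,-1)$, which completes the plan.
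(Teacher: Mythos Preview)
Your linear-algebra reduction matches the paper's, but the plan for producing $\beta_2$ has a genuine gap. In your notation the determinant is $\phi_1\phi_2-a_{12}a_{21}$, and of the three quantities depending on $\beta_2$ only $a_{21}$ (the projection of $h^{\beta_1}|_{G_{\beta_2}}$) is honestly a Chebotarev condition on $\sigma_{\beta_2}$. The other two, $\phi_2=h^{\beta_2}|_{G_{\beta_2}}$ and $a_{12}=h^{\beta_2}|_{G_{\beta_1}}$, involve the cocycle $h^{\beta_2}$, which itself varies with $\beta_2$; neither is the value of a \emph{fixed} cocycle on $\sigma_{\beta_2}$, so adjoining the field $K_{h^{\beta_1}}$ and invoking linear disjointness cannot prescribe them. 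Your sentence that ``$\phi_2$ is controlled through the nice-prime Frobenius data entering the construction of $h^{\beta_2}$'' does not give a mechanism, and $a_{12}$ is not discussed at all.

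The paper supplies two additional ingredients you are missing. First, it does not try to prescribe the diagonal: it uses a pigeonhole (positive-upper-density) argument to pass to a subset $Z\subset Q$ on which $h^{\beta}(\sigma_{\beta})$ takes a constant value $y$ (and a second pigeonhole on a local pairing value $z$), thereby fixing $\phi_1=\phi_2=y$ once $\beta_1,\beta_2\in Z$. Second, to control $a_{12}=h^{\beta_2}(\sigma_{\beta_1})$ it invokes a reciprocity-type identity (Proposition~3.6 of \cite{R4}) that expresses this quantity in terms of $\xi^{\beta_1}(\sigma_{\beta_2})$ for a fixed dual-Selmer class $\xi^{\beta_1}\in H^1_{P^*}(G_{F,T\cup\{\beta_1\}},X^*)$; this \emph{is} a Chebotarev condition on $\beta_2$. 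The required disjointness is then between $K_{h^{\beta_1}}$ and $K_{\xi^{\beta_1}}$, and the argument concludes by a density contradiction: if no $\beta_2\in Z$ works, one iterates with a sequence $\wp_i\in Z$ to trap $Z$ inside intersections of complementary Chebotarev sets of density $D\gamma^n\to 0$, contradicting the positive upper density of $Z$. Without the pigeonhole step, the reciprocity formula, and this density endgame, your proposal does not close.
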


\begin{proof}

From Prop 3.2, we know that $H^1 (G_{\beta}, X) \cong H^1 (G_{\beta}, \mathbf{k}) \oplus H^1 (G_{\beta}, \mathbf{k}(1)) $. Recall that we defined $N_{\beta} = H^1 (G_{\beta}, \mathbf{k}(1))$, so if $h^{\beta} \in N_{\beta}$ for some $\beta \in Q$, we're done. If not, we need to find $\beta_1, \beta_2$ as stated. Consider the matrix consisting of the values of $h^{\sigma_{\beta_i}} (\beta_j)$ for $i,j = 1,2$:

\begin{displaymath}
\left( \begin{array}{lll}
 & \sigma_{\beta_1} & \sigma_{\beta_2} \\ 
h^{\beta_1} & a & b \\ 
h^{\beta_2} & c & d
\end{array}\right)
\end{displaymath}

Since we are assuming that one prime $\beta$ doesn't exist as required, $a,d \notin N_{\beta_i}$ (we say that $a,d \ne 0$ for this condition). To prove the theorem we need to show that we can find a linear combination $h = \alpha_1 h^{\beta_1} + \alpha_2 h^{\beta_2}$ such that $h |_{\beta_i}  \in N_{\beta_i}$ for $i=1,2$ and $h$ solves the local condition property. As $h^{\beta_1} , h^{\beta_2} $ solve the local condition property, to show that $h$ solves the local condition property we need $\alpha_1 + \alpha_2 = 1$. Thus, we need to prove that the above matrix has unequal rows and zero determinant.

Let $y$ be the value of $h^{\beta} (\sigma_{\beta})$ that occurs most often. Then the set $Y = \{ \beta \in Q | h^{\beta} (\sigma_{\beta}) = y \}$ has positive upper density ($Q$ as noted earlier is a Cebotarev set). For a nice prime $\beta$ choose $\eta_{\beta} \in H^1 (G_{\beta}, X^*)$ such that $ inv_{\beta} (\eta_{\beta} \cup h^{\beta}) \ne 0$. Let $z$ be the value that occurs most often. Then $Z = \{\beta \in Y | inv_{\beta} (\eta_{\beta} \cup h^{\beta})  = z \}$ also has positive upper density. 

Choose any $\beta_1 \in Z$. Then, we need to find a $\beta_2 \in Z$ such that $ad - bc = 0$, and $b,c \ne y$. To choose $b (=h^{\beta_1} (\sigma_{\beta_2}))$ to be whatever we want is basically a Cebotarev condition on $\beta_2$, corresponding to $h^{\beta_1}$. Now we need to choose $c (=h^{\beta_2} (\sigma_{\beta_1}))$ appropriately. By Prop 3.6 of \cite{R4}, we see that $\frac{h^{\beta_2} (\sigma_{\beta_1})}{y}. \frac{1}{p} = \frac{1}{p} - \xi^{\beta_1}(\sigma_{\beta_2})z$, where $\xi^{\beta_1} \in H^{1}_{P^*} (G_{F,T \cup \{\beta\}}, X^*)$, $P_{\mathfrak{q}} = 0$ for $\mathfrak{q} \in T$ and $P_{\mathfrak{q}} = N_{\mathfrak{q}}$. Thus, we need to choose $\xi^{\beta_1}(\sigma_{\beta_2})$ appropriately. When we consider $h^{\beta_1}$ and $\xi^{\beta_1}$, we see that due to the linear disjointedness of the field extensions associated to them (as in prop 3.3), they both give independent Cebotarev conditions.

Suppose there is no $\beta_2 \in Z$ for which $h^{\beta_1} (\sigma_{\beta_2})$ and $\xi^{\beta_1}(\sigma_{\beta_2})$ can be what we want. Then the set $Z  \setminus \{ \beta_1 \}$ is in a Cebotarev class which is complementary to the Cebotarev conditions imposed on $\sigma_{\beta_2}$ due to the choice of $h^{\beta_1} (\sigma_{\beta_2}) = x \ne 0,y$. Let $D$ be the density of $Q$. Then the two complementary Cebotarev classes have density $D\gamma$ where $\gamma < 1$.

If we replace $\beta_1$ by a sequence of primes $\wp_i \in Z$ which don't allow us to choose the appropriate second prime, then $Z \setminus \{\wp_i \} $ lies in a Cebotarev class complementary to the Cebotarev class associated to $h^{\wp_i}$ and $\xi^{\wp_i}$, which are all independent of each other. By imposing $n$ such conditions, we see that the density of these complementary Cebotarev classes is $D \gamma^n$. As $n$ gets larger and larger, we see that the set $Z \setminus \{\wp_1, ..., \wp_n \}$ lies inside a set of density zero, which is a contradiction. Thus we can always find 2 primes $\beta_1, \beta_2$ which satisfy the hypothesis of the proposition.

\end{proof}

\begin{proposition}
 Let $B$ be a set of one or two nice primes such that $H^1_{N} (G_{S_n}, X) \hookrightarrow H^1_{N} (G_{F,S_n \cup B}, X)$ has cokernel of dimension $1$. Let $A$ be the set of one or two nice primes from the previous proposition and let $(z_{\mathfrak{q}})_{\mathfrak{q} \in S_n \cup B} \notin \oplus_{\mathfrak{q} \in S_n \cup B} N_{\mathfrak{q}} \oplus \psi_{S_n \cup B} (H^1 (G_{F,S_n \cup B}, X))$. 

Then, $H^1_{N} (G_{F,S_n \cup B \cup A}, X) = H^1_{N} (G_{F,S_n \cup B}, X)$.

\end{proposition}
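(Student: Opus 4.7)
The plan is to apply Wiles' formula (Proposition 3.1) in order to reduce the claim to an equality of dual Selmer dimensions, and then to exploit the Cebotarev choice of the primes in $A$ together with the hypothesis on $(z_\mathfrak{q})$. For any nice prime $\beta$ equipped with the local condition $N_\beta$, Proposition 3.2 and the definition of $N_\beta$ give $\dim_\mathbf{k} N_\beta = \dim_\mathbf{k} H^1(G_\beta, \mathbf{k}(1)) = 1 = \dim_\mathbf{k} H^0(G_\beta, X)$, so the contribution of each nice prime to the local sum in Proposition 3.1 vanishes. Setting $T_1 := S_n \cup B$ and $T_2 := T_1 \cup A$ and subtracting the two instances of Proposition 3.1, one obtains
\[
\dim H^1_N(G_{F, T_2}, X) - \dim H^1_N(G_{F, T_1}, X) = \dim H^1_{N^\bot}(G_{F, T_2}, X^*) - \dim H^1_{N^\bot}(G_{F, T_1}, X^*).
\]
The proposition thus reduces to showing that the right-hand side is zero, i.e.\ that no new dual Selmer class appears when passing from $T_1$ to $T_2$.

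Next, take $\eta \in H^1_{N^\bot}(G_{F, T_2}, X^*)$ and consider its restriction at each $\beta \in A$. By definition $\eta|_{G_\beta}$ lies in $N^\bot_\beta = H^1(G_\beta, \mathbf{k}(1))$, and a dimension count as in Proposition 3.2 shows that this is complementary to the unramified piece $H^1(G_\beta, \mathbf{k})$ inside $H^1(G_\beta, X^*)$. Recall from the construction of $A \subset Q$ in Propositions 3.6 and 3.7 that a basis $\zeta_1, \dots, \zeta_s$ of $(\psi^*_{T_1})^{-1}(\mathrm{Ann}\,(z_\mathfrak{q})_{\mathfrak{q} \in T_1})$ has $\zeta_i|_{G_\beta} = 0$ for every $\beta \in A$, while the distinguished dual class $\zeta \in H^1(G_{F, T_1}, X^*)$ with $\sum_{\mathfrak{q} \in T_1} inv_\mathfrak{q}(\zeta \cup z_\mathfrak{q}) \ne 0$ has $\zeta|_{G_\beta} \ne 0$; the existence of such a $\zeta$ is ensured by the hypothesis $(z_\mathfrak{q}) \notin \bigoplus_{\mathfrak{q} \in T_1} N_\mathfrak{q} \oplus \psi_{T_1}(H^1(G_{F, T_1}, X))$ together with the remark preceding Proposition 3.7.

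I would finish via a Poitou--Tate global-reciprocity calculation: pairing $\eta$ with the global class $h$ produced by Proposition 3.7, which satisfies $h|_\mathfrak{q} = z_\mathfrak{q}$ for $\mathfrak{q} \in T_1$ and $h|_\beta \in N_\beta$ for $\beta \in A$, gives $\sum_{\mathfrak{q} \in T_2} inv_\mathfrak{q}(\eta \cup h) = 0$. The local contributions at $\beta \in A$ vanish since $\eta|_\beta \in N^\bot_\beta$ annihilates $h|_\beta \in N_\beta$, so the sum collapses to $\sum_{\mathfrak{q} \in T_1} inv_\mathfrak{q}(\eta \cup z_\mathfrak{q}) = 0$, placing $\psi^*_{T_1}(\eta)$ in $\mathrm{Ann}\,(z_\mathfrak{q})$. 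It follows that the inflation piece of $\eta$ from $H^1(G_{F, T_1}, X^*)$ lies in the span of $\zeta_1, \dots, \zeta_s$; since each $\zeta_i|_{G_\beta}$ vanishes, the $N^\bot$-condition and the complementarity with the unramified line force the ramified component of $\eta$ at each $\beta$ to be trivial, so $\eta$ is inflated from $H^1_{N^\bot}(G_{F, T_1}, X^*)$, as desired. The main obstacle is the case $|A| = 2$, in which the ramified contributions of $\eta$ at $\beta_1$ and $\beta_2$ must be handled simultaneously, paralleling the two-variable matrix bookkeeping at the end of the proof of Proposition 3.7.
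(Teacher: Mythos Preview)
Your approach via Wiles' formula is genuinely different from the paper's, which works entirely on the $X$-side. The paper argues directly: it first notes that the third bullet of Proposition~3.4 ($f|_{G_\beta}=0$ for all $f\in H^1(G_{F,T_1},X)$ and $\beta\in A$) gives the easy inclusion $H^1_N(G_{F,T_1},X)\subset H^1_N(G_{F,T_2},X)$, and then, for the reverse inclusion, writes an arbitrary element of $H^1_N(G_{F,T_2},X)$ as $f+\alpha h^\beta$ (or $f+\alpha_1 h^{\beta_1}+\alpha_2 h^{\beta_2}$) with $f\in H^1(G_{F,T_1},X)$, and uses the hypothesis $(z_\mathfrak{q})\notin \bigoplus N_\mathfrak{q}\oplus \psi_{T_1}(H^1)$ together with $h^{\beta}|_{G_\mathfrak{q}}=z_\mathfrak{q}$ to force the coefficients $\alpha$ (resp.\ $\alpha_1,\alpha_2$) to vanish. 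No dualization is needed.

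Your dual argument, however, has a genuine gap at the key step. From global reciprocity you correctly deduce $\sum_{\mathfrak{q}\in T_1} \mathrm{inv}_\mathfrak{q}(\eta|_\mathfrak{q}\cup z_\mathfrak{q})=0$. But this does \emph{not} place $\eta$ in the span of $\zeta_1,\dots,\zeta_s$: those $\zeta_i$ form a basis of $(\psi^*_{T_1})^{-1}(\mathrm{Ann}(z_\mathfrak{q}))$ inside $H^1(G_{F,T_1},X^*)$, whereas your $\eta$ lives in the larger group $H^1(G_{F,T_2},X^*)$. Writing $\eta=\eta_0+c\,\xi$ with $\eta_0\in H^1(G_{F,T_1},X^*)$ and $\xi$ a new class (say $|A|=1$), the reciprocity identity constrains a combination of $\eta_0$ and $c$, not $\eta_0$ alone; you have no independent control of $\sum_{T_1}\mathrm{inv}(\xi\cup z)$. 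Consequently the sentence ``the $N^\bot$-condition and the complementarity with the unramified line force the ramified component of $\eta$ at each $\beta$ to be trivial'' is circular: complementarity of $N^\bot_\beta$ and $H^1_{nr}(G_\beta,X^*)$ only yields $\eta|_\beta=0$ \emph{after} you know $\eta$ is unramified at $\beta$, which is exactly what you are trying to prove. You also never establish the inequality in the other direction (that inflation carries $H^1_{N^\bot}(T_1)$ into $H^1_{N^\bot}(T_2)$), and in fact it need not: $\zeta$ itself has $\zeta|_{G_\beta}\neq 0$ unramified, hence not in $N^\bot_\beta$. The cleanest fix is to abandon the dual side and argue as the paper does, using the explicit generators $h^{\beta_i}$ of the cokernel of inflation on the $X$-side; the two-prime case then really does reduce to the matrix bookkeeping of Proposition~3.5, but on the primal rather than the dual Selmer group.
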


\begin{proof}
 
In the hypothesis of Prop 3.4, $f \in H^1 (G_{F,T}, X) \Rightarrow f|_{G_{\mathfrak{q}}} = 0$. As $N_{\mathfrak{q}} = 0$, for $\mathfrak{q} \in S_n \cup B$ and $f \in N_{\mathfrak{q}}$ for $\mathfrak{q} \in A$, we see that $H^1_{N} (G_{F,S_n \cup B}, X) \subset H^1_{N} (G_{F,S_n \cup B \cup A}, X)$. For the other containment we consider the 2 cases for $A$:

1) $A$ has one prime $\beta$:

Any element of $H^1_{N} (G_{F,S_n \cup B \cup A}, X) \setminus H^1_{N} (G_{F,S_n \cup B}, X)$ looks like $f + \alpha h^{\beta}$, where $f \in H^1 (G_{F,S_n \cup B}, X)$, $\alpha \ne 0$ and $h^{\beta}$ solves the local condition property i.e. $h^{\beta} |_{G_{\mathfrak{q}}} = z_{\mathfrak{q}}$ for $\mathfrak{q} \in S_n \cup B$. Thus, $ (f + \alpha h^{\beta})|_{G_{\mathfrak{q}}} = f + \alpha z_{\mathfrak{q}} \in N_{\mathfrak{q}}$, for all $\mathfrak{q} \in S_{n} \cup B \cup A$. But we assumed that $z_{\mathfrak{q}} \notin N_{\mathfrak{q}} \oplus Im(\psi_{S_n \cup B}) \Rightarrow \alpha = 0$, a contradiction.

2) $A$ has 2 primes $\beta_1, \beta_2$:

Any element of $H^1_{N} (G_{F,S_n \cup B \cup A}, X) \setminus H^1_{N} (G_{F,S_n \cup B}, X)$ looks like $f + \alpha_1 h^{\beta_1} + \alpha_2 h^{\beta_2}$, where $f \in H^1 (G_{F,S_n \cup B}, X)$. Using a similar argument as in case 1, we see that $\alpha_1 + \alpha_2 = 0$. We know that $\alpha_1 (h^{\beta_1} - h^{\beta_2})|_{G_{\mathfrak{q}}}= 0$ for all $\mathfrak{q} \in S_n \cup B$. This means that $f|_{G_{\mathfrak{q}}} \in N_{\mathfrak{q}}$ for all $\mathfrak{q} \in S_n \cup B$ $\Rightarrow f \in H^1_N (G_{F,S_n \cup B}, X)$. We also know that $f|_{G_{\beta_{i}}} = 0$ for $i = 1,2$, so $f \in H^1_N (G_{F,S_n \cup B \cup A}, X)$. Thus, $\alpha_1 (h^{\beta_1} - h^{\beta_2}) \in H^1_N (G_{F,S_n \cup B \cup A}, X) \Rightarrow \alpha_1 ( h^{\beta_1} - h^{\beta_2} ) |_{G_{\beta_i}} \in N_{\beta_i}$, for $i = 1,2$. By the construction of $\beta_i$ in the previous proposition, we see that $\alpha_1 = 0 \Rightarrow f \in H^1_N (G_{F,S_n \cup B}, X)$, which is a contradiction.

\end{proof}

\begin{proposition}
 There exists a set $B$ of one or two $\rho_n$-nice primes such that the map $H^1_N(G_{F, S_n},X) \hookrightarrow H^1_N(G_{F, S_n \cup B},X)$ has one dimensional cokernel. 
\end{proposition}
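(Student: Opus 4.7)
The plan is to invoke Prop 3.5 with a carefully chosen local obstruction vector $(z_\mathfrak{q})_{\mathfrak{q} \in S_n}$ that lies inside the local Selmer subspaces $L_\mathfrak{q}$ defining $H^1_N$ at primes of $S_n$. First I would pick $(z_\mathfrak{q}) \in \bigoplus_{\mathfrak{q} \in S_n} L_\mathfrak{q}$ with $(z_\mathfrak{q}) \notin \mathrm{Im}(\psi_{S_n})$; such a choice exists by a dimension count via global duality, provided some dual class in $H^1(G_{F, S_n}, X^*)$ does not annihilate $\bigoplus L_\mathfrak{q}$ (in the degenerate case where this fails the Selmer group has already saturated Wiles' bound, and a separate argument produces the required growth directly).

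Applying Prop 3.5 to this $(z_\mathfrak{q})$ — restricting the underlying Cebotarev class further so that the primes produced are actually $\rho_n$-nice, which is automatic by compatibility of the defining conditions — yields the required set $B$ of one or two primes, together with a cocycle $h$ satisfying $h|_\mathfrak{q} = z_\mathfrak{q}$ for $\mathfrak{q} \in S_n$ and $h|_\beta \in N_\beta$ for $\beta \in B$ (either $h = h^\beta$ directly, or $h = \alpha_1 h^{\beta_1} + \alpha_2 h^{\beta_2}$). This cocycle lies in $H^1_N(G_{F, S_n \cup B}, X)$ because every local Selmer condition is met, and it does not inflate from $H^1_N(G_{F, S_n}, X)$ because its restriction to $S_n$ equals $(z_\mathfrak{q}) \notin \mathrm{Im}(\psi_{S_n})$. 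This gives cokernel dimension at least one.

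For the matching upper bound I would apply Wiles' formula (Prop 3.1) to both $S_n$ and $S_n \cup B$: because each nice $\beta$ contributes $\dim N_\beta - \dim H^0(G_\beta, X) = 1 - 1 = 0$ to the alternating sum, the Wiles Euler characteristic is unchanged, so $\dim H^1_N(G_{F, S_n \cup B}, X) - \dim H^1_N(G_{F, S_n}, X)$ equals the analogous jump of the dual Selmer group $H^1_{N^\perp}(\cdot, X^*)$. The Cebotarev conditions built into the set $Q$ of Prop 3.4 — namely that $\zeta_i|_{G_\beta} = 0$ for the chosen basis of the annihilator of $(z_\mathfrak{q})$, and only the distinguished $\zeta$ has nonzero restriction — force this dual jump to be at most one. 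The main obstacle is the two-prime case, where a priori $H^1_N$ could enlarge by two dimensions at once; ruling this out requires exploiting the non-degeneracy of the matrix $(h^{\beta_i}(\sigma_{\beta_j}))$ and the relation $\alpha_1 + \alpha_2 = 1$ from the proof of Prop 3.5 to verify that only the specific linear combination $h$, and not the individual $h^{\beta_i}$, actually lies in $H^1_N(G_{F, S_n \cup B}, X)$.
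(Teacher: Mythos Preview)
Your route is not the paper's. The paper does not invoke Prop 3.5 here at all; it cites Prop 10 of \cite{KLR} (generalised to $F$), which produces a set $B=\{\beta_1,\beta_2\}$ of one or two $\rho_n$-nice primes together with a cocycle $f=\alpha_1 f_{\beta_1}+\alpha_2 f_{\beta_2}$ satisfying $f|_{G_\mathfrak{q}}=0$ for every $\mathfrak{q}\in S_n$ and $f(\sigma_{\beta_i})=0$, hence $f|_{\beta_i}\in N_{\beta_i}$. The local data at $S_n$ are forced to \emph{vanish identically}, so membership in $H^1_N$ is automatic regardless of what the Selmer conditions at $S_n$ actually are; there is no need to select a nontrivial $(z_\mathfrak{q})$ inside $\bigoplus L_\mathfrak{q}$. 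Prop 3.5 in this paper plays the complementary role: it builds the set $A$ of Prop 3.6, where $(z_\mathfrak{q})\notin\bigoplus N_\mathfrak{q}+\mathrm{Im}\,\psi$ and the point is precisely that $H^1_N$ does \emph{not} grow. You are repurposing it in the opposite regime.

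That repurposing leaves two real gaps. First, you need $(z_\mathfrak{q})\in\bigoplus_{\mathfrak{q}\in S_n}L_\mathfrak{q}\setminus\mathrm{Im}\,\psi_{S_n}$, and this fails exactly when $H^1(G_{F,S_n},X^*)=H^1_{L^\perp}(G_{F,S_n},X^*)$; the sentence ``a separate argument produces the required growth directly'' is not that argument. Second, the upper bound in the two-prime case is not proved. Wiles' formula does reduce the question to bounding the jump in the dual Selmer group, but the Cebotarev conditions of Prop 3.4 constrain restrictions of classes in $H^1(G_{F,S_n},X^*)$, not in the dual Selmer subgroup $H^1_{N^\perp}$; you have not explained how to pass from one to the other. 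The matrix $(h^{\beta_i}(\sigma_{\beta_j}))$ was engineered so that a specific combination lands in $N_{\beta_i}$, not to exclude a second independent element of $H^1_N(G_{F,S_n\cup B},X)$, and the relation $\alpha_1+\alpha_2=1$ gives no obvious obstruction to such an element. As written, the proposal establishes cokernel $\ge 1$ (modulo the degenerate case) but not cokernel $\le 1$.
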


\begin{proof}
By a straight generalisation of prop 10 of \cite{KLR} we can find a set $B = \{\beta_1, \beta_2 \}$ of one or two $\rho_n$-nice primes for which there is a linear combination $f = \alpha_1 f_{\beta_1} + \alpha_2 f_{\beta_2}$ such that $f(\sigma_{\beta_1}) = 0$ for $i =1,2$ and $f|_{G_{\mathfrak{q}}} = 0$ for $\mathfrak{q} \in S_n$. As $\beta_1, \beta_2$ are nice primes, we see that $N_{\beta_{i}} = 0$ for $i =1,2$, which means that $f$ is in the kernel of the map $H^1 (G_{F, S_n \cup B},X) \rightarrow \oplus_{\mathfrak{q} \in S_n \cup B} H^1(G_{\mathfrak{q}},X) / N_{\mathfrak{q}}$. 
\end{proof}

\section{Large image}

We want to construct $\rho: G_F \rightarrow GL_2 (W(\mathbf{k})[[ T_1,...,T_r,...]])$ such that $Im \rho \supseteq SL_2 (W(\mathbf{k})[[ T_1,...,T_r,...]])$. We start with $\overline{\rho} : G_F \rightarrow GL_2 (\mathbf{k})$ and lift it successively to $\rho_n : G_{F, S_n} \rightarrow GL_2 (W(\mathbf{k})[[ T_1,...,T_n]] / (p, T_1,...,T_n)^n)$, and define $\rho = \underleftarrow{lim }\rho_n$ such that at each stage $n$, $Im \rho_n \supseteq SL_2 (W(\mathbf{k})[[ T_1,...,T_n]]/(p, T_1,...,T_n)^n)$.

If $R_n$ is the deformation ring of $\rho_n$, with $m_{R_n}$ the maximal ideal, then we see that $R_n / m^n_{R_n} = (W(\mathbf{k})[[ T_1,...,T_n]] / (p, T_1,...,T_n)^n)$. We will add more primes of ramification to $S_n$ and get a new set of primes $S_{n+1}$, such that the deformation ring associated to $S_{n+1}$ has $R_{n+1}/m^{n+1}_{R_{n+1}}$ as a quotient. This gives us a surjection from $R_{n+1} / m_{R_{n+1}}^{n+1} \twoheadrightarrow R_n /m_{R_n}^n$, which allows us to get the inverse limit $\rho = \underleftarrow{lim} \rho_n$.

We assume that there exists $\rho_n : G_{F, S_n} \rightarrow GL_2 (R_n / m^n_{R_n})$, and $dim H^1_N (G_{F, S_n}, X) = n$. By Prop 3.7 we can find a set $B$ of $\rho_n$-nice primes such that $dim H^1_N (G_{F, S_n \cup B}, X) = n +1$. Let $U$ be the deformation ring associated to the augmented set $S_n \cup B$, with the deformation conditions ($N_{\mathfrak{q}}, C_{\mathfrak{q}}$). As $B$ consists of $\rho_n$-nice primes, we have a surjection $ \phi : U \twoheadrightarrow  R_n /m_{R_n}^n$ which means that for some $I_1$, we $U / I_1 = R_n / m_{R_n}^n$. As $dim H^1_N (G_{F, S_n \cup B}, X) = n +1$, we see that as a ring $U$ consists of power series of $(n+1)$ variables. Thus, for some $I_2$, $U/I_2 = \mathbf{k}[[T_1,...,T_{n+1}]]/(T_1,...,T_{n+1})^2 $. Let $I = I_1 \cap I_2$, and define $U_0 = U /I$. 

Our goal is to get a deformation ring which has $R_{n+1}/m_{R_{n+1}}^{n+1}$ as a quotient. If $U_0$ is such a deformation ring, we're done. If not, we get a sequence 

$R_{n+1}/m_{R_{n+1}}^{n+1} \twoheadrightarrow ... \twoheadrightarrow U_1 \twoheadrightarrow U_0$, 

where the kernel at each stage has order $p$. Then we add more primes of ramification to $S_n \cup B$ so that the augmented deformation ring has $U_1$ as a quotient. We then keep iterating to get our required deformation ring.

As $U_0$ is a quotient of $U$, we let $\rho_{U_0}$ be the induced deformation. Since $B$ consists of nice primes and all global obstructions can be detected locally ($\Sha^2_{S_n}(X) = 0$ even as $S_n$ gets bigger) we see that $\rho_{U_0}$ can be lifted to $U_1$, and we call the deformation $\tilde{\rho_{U_1}}$. If there are no local obstructions to lifting $\tilde{\rho_{U_1}}$, we're done. If there are local obstructions, then we choose a set of cohomology classes $(z_{\mathfrak{q}})_{\mathfrak{q} \in S_n \cup B}$ such that the action of $z_{\mathfrak{q}}$ on $\tilde{\rho_{U_1}}|_{z_{\mathfrak{q}}}$ overcomes the local obstructions at $\mathfrak{q} \in S_n \cup B$. By Prop 3.5 we can find a set $A$ and a cohomology class $h$ such that:

\begin{itemize}
 \item $\mathfrak{q}$ is $\tilde{\rho_{U_1}}$-nice, for $\mathfrak{q} \in A$ (no new obstructions at $A$)
 \item $h|_{G_{\mathfrak{q}}} \in N_{\mathfrak{q}}$ for $\mathfrak{q} \in A$ ($h$ preserves the class of deformations)
 \item $h|_{G_{\mathfrak{q}}} = z_{\mathfrak{q}}$, for $\mathfrak{q} \in S_n \cup B$ ($h$ overcomes local obstructions at $S_n \cup B$)
\end{itemize}

We now define $\rho_{U_1} = (I + h)\tilde{\rho_{U_1}}$. There are no obstructions to lifting $\rho_{U_1}$ and the augmented deformation ring has $U_1$ as a quotient. By Prop 3.6 we know that $dim H^1_N (G_{F, S_n \cup B}, X) =  dim H^1_N (G_{F, S_n \cup B \cup A}, X) = n+1 $, so we can keep iterating in a similar way by adding more primes, and get a set $S_{n+1}$ which has $R_{n+1}/m_{R_{n+1}}^{n+1}$ as a quotient.

It remains to show that at each stage $n$, $Im \rho_n \supseteq SL_2(R_n / m_{R_n}^n)$. We recall a proposition due to Boston \cite{Bo}.

\begin{proposition}
 Let $R$ be a complete noetherian local ring with maximal ideal $m$, with $R/m$ finite, char$R/m = p \ne 2$. Let $H$ be a closed subgroup  of $SL_n (R)$ projecting onto $SL_n (R/m^2)$. Then $H = SL_n (R)$.
\end{proposition}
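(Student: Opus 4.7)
The plan is to induct on $k$ on the statement that the image of $H$ in $SL_n(R/m^k)$ equals $SL_n(R/m^k)$, using the filtration by congruence subgroups together with a Lie-algebra commutator identity on successive quotients. Since $R/m$ is finite and $R$ is complete, $SL_n(R) = \varprojlim_k SL_n(R/m^k)$ is a profinite group and the congruence subgroups $\Gamma_k := \ker(SL_n(R) \twoheadrightarrow SL_n(R/m^k))$ form a fundamental system of open neighborhoods of the identity. Because $H$ is closed in $SL_n(R)$, once surjectivity onto each $SL_n(R/m^k)$ is established we will have $H = SL_n(R)$. The base case $k = 2$ is precisely the hypothesis.

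For the inductive step from $k$ to $k+1$, I would first identify $\Gamma_k/\Gamma_{k+1}$ with $\mathfrak{sl}_n(m^k/m^{k+1})$ via $I + m^k A \mapsto \overline{A}$, the trace-zero condition arising from $\det(I + m^k A) = 1$ forcing $\mathrm{tr}(A) \in m$. The problem reduces to showing that $H$ surjects onto this kernel. I would then invoke the commutator formula
\[
[I + m X,\, I + m^{k-1} Y] \;\equiv\; I + m^k [X,Y] \pmod{m^{k+1}},
\]
a direct power-series computation. By the inductive hypothesis, $H$ contains elements congruent to $I + mX$ modulo $m^2$ and to $I + m^{k-1}Y$ modulo $m^k$ for every $X, Y \in \mathfrak{sl}_n(R/m)$; commutators of such elements then produce all cosets of the form $I + m^k[X,Y]$ in $\Gamma_k/\Gamma_{k+1}$.

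The main obstacle is to check that these brackets $[X,Y]$ span $\mathfrak{sl}_n$ over the residue field, i.e.\ that $\mathfrak{sl}_n = [\mathfrak{sl}_n, \mathfrak{sl}_n]$. This is precisely where the hypothesis $p \neq 2$ enters: for $n = 2$ the identities $[E,F] = H$, $[H,E] = 2E$, $[H,F] = -2F$ recover all three basis vectors only when $2$ is invertible, while for $n \geq 3$ the relations $[E_{ij}, E_{jk}] = E_{ik}$ (for $i \neq k$) together with $[E_{ij}, E_{ji}] = E_{ii} - E_{jj}$ yield perfectness unconditionally. With this in hand, the image of $H$ in $SL_n(R/m^{k+1})$ surjects onto $SL_n(R/m^k)$ and contains the full kernel $\Gamma_k/\Gamma_{k+1}$, so it must be all of $SL_n(R/m^{k+1})$, and the induction closes.
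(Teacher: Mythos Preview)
The paper does not actually prove this proposition: it is quoted verbatim as ``a proposition due to Boston'' with a citation to \cite{Bo}, and no argument is given. So there is no in-paper proof to compare against.

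Your argument is correct and is essentially the standard one (and, as far as I know, Boston's). A couple of minor points of hygiene. First, your notation $I+m^kA$ treats $m$ as if it were an element; in a general complete local ring $m$ need not be principal, so the commutator identity should really be phrased as: for $g\in\Gamma_1$ with $g\equiv I+A\pmod{m^2}$ and $h\in\Gamma_{k-1}$ with $h\equiv I+B\pmod{m^k}$, one has $[g,h]\equiv I+[A,B]\pmod{m^{k+1}}$, where $A\in M_n(m/m^2)$ and $B\in M_n(m^{k-1}/m^k)$. The computation $ghg^{-1}h^{-1}=I+[A,B]g^{-1}h^{-1}$ together with $[A,B]\in M_n(m^k)$ gives this cleanly. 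You then need the extra observation that the multiplication map $m/m^2\otimes m^{k-1}/m^k\to m^k/m^{k+1}$ is surjective, so that these commutators, together with perfectness of $\mathfrak{sl}_n$ over $R/m$, fill out all of $\mathfrak{sl}_n(m^k/m^{k+1})$. Second, in the line ``forcing $\mathrm{tr}(A)\in m$'' you mean $\mathrm{tr}(A)\in m^{k+1}$ (indeed the higher terms of $\det(I+A)$ lie in $m^{2k}$). Neither of these affects the validity of the argument.
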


For our purpose, we let $H = Im \rho_n \cap SL_2 (R_n/m_{R_n}^n)$. To show that $Im \rho_n \supseteq SL_2 (R_n / m_{R_n}^n)$, all we need to show is that $Im \rho_n / m_{R_n}^2 \supseteq SL_2 (R_n / m_{R_n}^2)$.

Consider the exact sequence $SL_2 (m_{R_m}/ m_{R_m}^2) \xrightarrow{\theta} SL_2 (R_n / m_{R_n}^2) \xrightarrow{\phi} SL_2 (R_n / m_{R_n})$. As a $\mathbf{k} [Im \overline{\rho}]$-module, $Ker \phi$ has $(n+1)$ copies of $X$. To see this, note that Ker$\{SL_2 (W(\mathbf{k})/p^2) \rightarrow SL_2(\bf{k}) \}$ consists of one copy of $X$ ($R_n / m_{R_n} = \mathbf{k}$). $R_n / m_{R_n}^2$ consists of power series like $ a_0 + a_1 T_1 + ...+ a_n T^n$, where $a_0 \in W(\mathbf{k})/p^2$ corresponds to one copy of $X$, and the remaining $n$ $a_i$'s ($\in \bf{k}$) account for the other $n$ copies. 

Now we look at the projection $ Im \rho_n / m_{R_n}^2 \rightarrow Im \overline{\rho} $. Recall that we assumed $Im \overline{\rho} \supseteq SL_2 (\mathbf{k})$, so we need to obtain $(n+1)$ copies of $X$ in the kernel to show that $Im \rho_n / m_{R_n}^2$ is big enough. We know there is an equivalence between $Ext^1 (X,X)$ (killed by $p$), $H^1 (G_F, X)$ and lifts of $\overline{\rho}$ to the dual numbers $\bf{k}[\epsilon] $ . Since $\rho_n$ is a deformation of $\overline{\rho}$ and we assumed that $dim H^1_N (G_F, X) = n$, we get $n$ copies of $X$. These give split extensions as we are considering lifts to the dual numbers and $\bf{k}$ embeds
    in $\bf{k}[\epsilon]$.  To get the last copy of $X$, we see that for $n \geq 2$, the deformation of $\overline{\rho}$ to $GL_2 (W(\mathbf{k}) / p^2)$ gives us an extension of $X$. The exact sequence $SL_2 (p W(\mathbf{k}) / p^2) \hookrightarrow SL_2 (W(\mathbf{k}) / p^2) \twoheadrightarrow SL_2 (W(\mathbf{k}) / p)$ doesn't split, so this corresponds to a non-split extension and see that $Im \rho_n / m^2_{R_n} \supseteq SL_2 ( R_n / m_{R_n}^2)$.

Now we are in a position to state the final theorem.

\begin{theorem}
There exists a deformation of $\overline{\rho}$, ramified at infinitely many primes, $\rho : G_F \rightarrow GL_2 (W(\mathbf{k})[[T_1, T_2,.., T_r,....,]])$, whose image contains $SL_2 (W(\mathbf{k})[[T_1, T_2,.., T_r, ....,]])$.
\end{theorem}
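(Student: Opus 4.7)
The plan is to construct $\rho$ as the inverse limit $\varprojlim \rho_n$ of a compatible system of deformations $\rho_n : G_{F,S_n} \to GL_2(R_n/m_{R_n}^n)$, where $R_n/m_{R_n}^n = W(\mathbf{k})[[T_1,\dots,T_n]]/(p,T_1,\dots,T_n)^n$, with the strict induction hypothesis that $\dim H^1_N(G_{F,S_n}, X) = n$ and that each $S_{n+1} \supsetneq S_n$, so that $\rho$ is ramified at infinitely many primes. The base case $n=1$ is the given $\overline{\rho}$ (with some choice of $S_1$ containing $p$ and the primes of ramification of $\overline{\rho}$, augmented using Prop 3.3(b) so $\Sha^2 = 0$).

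For the inductive step, I would follow the scheme outlined just before the theorem. Given $\rho_n$, use Prop 3.7 to enlarge $S_n$ by a set $B$ of $\rho_n$-nice primes so that $\dim H^1_N(G_{F,S_n \cup B}, X) = n+1$, and form the quotient $U_0 = U/I$ of the resulting deformation ring. Walk up the tower $U_0 \twoheadleftarrow U_1 \twoheadleftarrow \cdots \twoheadleftarrow R_{n+1}/m_{R_{n+1}}^{n+1}$ one step at a time: at each step, lift the existing deformation $\rho_{U_i}$ to $\tilde\rho_{U_{i+1}}$ (possible since niceness and $\Sha^2_{S_n} = 0$ mean obstructions are detected locally and lie in a one-dimensional space), and if there are local obstructions, apply Prop 3.5 to produce a set $A$ of $\tilde\rho_{U_{i+1}}$-nice primes and a class $h \in H^1(G_{F,S_n \cup B \cup A}, X)$ satisfying $h|_{G_{\mathfrak{q}}} \in N_{\mathfrak{q}}$ for $\mathfrak{q} \in A$ and $h|_{G_{\mathfrak{q}}} = z_{\mathfrak{q}}$ for $\mathfrak{q} \in S_n \cup B$; then set $\rho_{U_{i+1}} = (I+h)\tilde\rho_{U_{i+1}}$. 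Prop 3.6 guarantees $\dim H^1_N$ is unchanged after adjoining $A$, preserving the dimension count. Iterating yields $\rho_{n+1}$ on some $S_{n+1}$, and the natural surjection $R_{n+1}/m_{R_{n+1}}^{n+1} \twoheadrightarrow R_n/m_{R_n}^n$ gives the compatibility required for the inverse limit.

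For the image assertion, I would apply Prop 4.1 (Boston) at each level to $H = \operatorname{Im}(\rho_n) \cap SL_2(R_n/m_{R_n}^n)$: it suffices to check that the image surjects onto $SL_2(R_n/m_{R_n}^2)$. From the exact sequence
\[
1 \to SL_2(m_{R_n}/m_{R_n}^2) \to SL_2(R_n/m_{R_n}^2) \to SL_2(R_n/m_{R_n}) \to 1,
\]
the kernel is, as a $\mathbf{k}[\operatorname{Im}\overline{\rho}]$-module, a sum of $n+1$ copies of $X$: $n$ split copies from the $n$-dimensional space $H^1_N(G_{F,S_n}, X)$ of first-order deformations into $\mathbf{k}[\epsilon]$, plus the one non-split copy arising from the fact that $SL_2(pW(\mathbf{k})/p^2) \hookrightarrow SL_2(W(\mathbf{k})/p^2) \twoheadrightarrow SL_2(\mathbf{k})$ is non-split. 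Since $\operatorname{Im}\overline{\rho} \supseteq SL_2(\mathbf{k})$, comparison with the analogous sequence for $SL_2(R_n/m_{R_n}^2)$ shows the image is all of $SL_2(R_n/m_{R_n}^2)$, so Boston gives $\operatorname{Im}\rho_n \supseteq SL_2(R_n/m_{R_n}^n)$. Passing to the inverse limit, $\operatorname{Im}\rho \supseteq SL_2(W(\mathbf{k})[[T_1, T_2, \ldots]])$.

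The main obstacle is bookkeeping in the inductive step: one must simultaneously ensure that each augmentation by nice primes (a) kills the local obstructions to lifting along the intermediate quotients $U_i \to U_{i+1}$, (b) preserves the dimension of the Selmer group $H^1_N$ so that exactly one new variable is introduced per stage, and (c) leaves the primes in play nice for the newly constructed deformation, so that the cohomological calculation of Prop 3.2 continues to apply at the next level. This is precisely the content of Propositions 3.5--3.8, whose coordinated use is the technical heart of the argument; a secondary subtlety is that the verification of large image must be carried out uniformly in $n$ so that it descends through the inverse limit, which is handled by the fact that Boston's conclusion holds at every finite level.
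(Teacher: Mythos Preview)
Your proposal follows the same architecture as the paper: build the tower $\rho_n$ inductively via Propositions 3.5--3.7, invoke Boston (Prop 4.1) at each finite level by counting the $n+1$ copies of $X$ in the kernel of $SL_2(R_n/m_{R_n}^2) \to SL_2(\mathbf{k})$, and pass to the inverse limit. All of that matches.

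The one place where you are too quick is the final sentence, ``Passing to the inverse limit, $\operatorname{Im}\rho \supseteq SL_2(W(\mathbf{k})[[T_1,T_2,\ldots]])$.'' This is exactly the content of the paper's formal proof of the theorem, and it is not automatic: from $\operatorname{Im}\rho_n \supseteq SL_2(R_n/m_{R_n}^n)$ for every $n$ you get, for a fixed $\alpha \in SL_2(R)$, elements $g_n \in G_F$ with $\rho_n(g_n) = \alpha_n$, but these $g_n$ need not be compatible across levels, so you cannot simply set $g = \varprojlim g_n$. The paper resolves this using compactness of $G_F$: extract a convergent subsequence $g_{n_m} \to g$, observe that for $n_m > n$ the compatibility $\rho_{n_m} \bmod m_{R_n}^n = \rho_n$ forces $\rho_n(g_{n_m}) = \alpha_n$, and then use continuity of $\rho_n$ to conclude $\rho_n(g) = \alpha_n$ for every $n$, hence $\rho(g) = \alpha$. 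Your remark that large image ``descends through the inverse limit'' because ``Boston's conclusion holds at every finite level'' does not by itself supply this compactness argument; you should make it explicit.
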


\begin{proof}
All that we need to show is that $\rho = \underleftarrow{lim} \rho_n$ is full  i.e. given $\alpha \in SL_2 (R)$, $\exists$ $g \in G_F$ such that $\rho (g) = \alpha$ , where $R = \underleftarrow{lim} R_n / m_{R_n}^n $. 

At each stage $n$ we know that $Im \rho_n \supseteq SL_2 (R_n/m_{R_n}^n)$. So if $\alpha_n \in R_n / m_{R_n}^n$ is the image of $\alpha$, then there exists a $g_n \in G_F$ such that $\rho_n (g_n) = \alpha_n$. Ideally we would like to define $g = \displaystyle\lim_{\overrightarrow{n}} g_n$, and take limits again but we need to modify $g_n$ a little bit. As $G_F$ is compact and Hausdorff, there exists a subsequence $g_{n_m}$ of $g_n$, such that $g_{n_m}$ has $g$ as a limit point. 

For $n_m > n$, we know that $\rho_{n_m}$ is a deformation of $\rho_n$. So when we reduce $\rho_{n_m}$ mod $m_{R_n}^n$, we see that $\rho_n (g_{n_m}) =\alpha_n$. Also, as $\rho_n$ is continuous we can switch limits and see that $\displaystyle\lim_{\overleftarrow{n}} \rho_n (\displaystyle\lim_{\overrightarrow{m}}g_{n_m}) = \displaystyle\lim_{\overleftarrow{n}}   \displaystyle\lim_{\overrightarrow{m}}\rho_n(g_{n_m})$.

Finally, $\rho(g) = \displaystyle\lim_{\overleftarrow{n}} \rho_n(g) = \displaystyle\lim_{\overleftarrow{n}} \rho_n(\displaystyle\lim_{\overrightarrow{m}} g_{n_m}) = \displaystyle\lim_{\overleftarrow{n}}\displaystyle\lim_{\overrightarrow{m}}\rho_n (g_{n_m}) = \displaystyle\lim_{\overleftarrow{n}} (\displaystyle\lim_{\overrightarrow{m}} \alpha_{n}) = \displaystyle\lim_{\overleftarrow{}n} \alpha_n = \alpha$.

\end{proof}

\section{Distribution Statements}

We start with a residual representation $\overline{\rho}: G \rightarrow GL_2(\mathbb{Z}/ p)$, and lift it successively to $\rho_n : G \rightarrow GL_2 (\mathbb{Z}/p^n)$. At each stage we will add more primes $Q_n$ to the ramification set $S_n$, and fix the characteristic polynomials for a finite, but large, set of unramified primes $R_n$. To overcome local obstructions to lifting at the primes that we keep adding we will need to add some more primes $A_n$. This will allow the lifts to be unobstructed at the ramified primes $S_n$ and keep the characteristic polynomials unchanged at the unramified primes $R_n$. As we keep lifting $\overline{\rho}$ to $\rho_n$, we will control the cardinality at each stage for $R_n$ so that after taking limits the cardinality is large enough to get a representation $\rho : G \rightarrow GL_2(\mathbb{Z}_p)$ for which the distribution of the $a_l$'s is in contrast to the distribution statements in the first section.

Let $\overline{\rho}: G \rightarrow GL_2(\mathbb{Z}/p)$ be a surjective Galois representation, with $p \geq 5$ that arises from $S_2(\Gamma_0(N))$ for some $(N,p) = 1$ and $N$ square-free. By results of Serre, we know that such representations exist. If we take a semistable elliptic curve $E$ over $\mathbb{Q}$, then for all but finitely many primes the corresponding mod $p$ representation satisfies the required conditions. Let $S$ be a finite set of primes containing $p$ and the primes of ramification of $\overline{\rho}$. The key lemma that allows us do this is Lemma 8 from \cite{KLR}:

\begin{lemma}
Let $\rho_n$ be a deformation of $\overline{\rho}$ to $GL_2(\mathbb{Z}/p^n)$, unramified outside a set $S$, and assume $\Sha^1_{S}(X)$ and  $\Sha^2_{S}(X)$ are trivial. Let $R$ be any finite collection of unramified primes of $
\overline{\rho}$ disjoint from $S$. Then, there is a finite set $Q = \{q_1, q_2,..., q_m \}$ of $\rho_n$-nice primes disjoint from $R \cup S$, such that we get the following isomorphisms:

\begin{itemize}

\item $H^1(G_{S \cup R \cup Q}, X^*) \rightarrow \oplus_{v \in Q} H^1(G_v, X^*) $

\item $H^1(G_{S \cup Q}, X) \rightarrow (\oplus_{s \in S} H^1(G_s, X)) \oplus (\oplus_{r \in R} H^1_{nr} (G_r,X))$

\end{itemize}

($H^1_{nr}(G_v, X) = Im {H^1(G_v/I_v, X^{I_v}) \rightarrow H^1(G_v, X)}$, where $I_v \subset G_v$ is the inertia group)

\end{lemma}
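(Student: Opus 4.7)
The plan is to construct $Q$ by iterated Chebotarev, in the style of the proof of Proposition 3.3(b). The first isomorphism is engineered by requiring the primes of $Q$ to detect a basis of the dual Selmer group $H^1(G_{S \cup R}, X^*)$; the second then follows by combining Wiles' formula (Proposition 3.1) with Poitou--Tate duality, since the ``dual'' Selmer group relevant to (2) turns out to be precisely the kernel of (1).

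First I would fix a basis $\zeta_1, \ldots, \zeta_s$ of $H^1(G_{S \cup R}, X^*)$ and a basis $f_1, \ldots, f_r$ of $H^1(G_{S \cup R}, X)$. Let $K$ be the composite of the fixed field of $Ad^0 \rho_n$ and $\mathbb{Q}(\mu_{p^n})$; because the image of $\rho_n$ surjects onto $Im \overline{\rho} \supseteq SL_2(\mathbb{F}_p)$, the linear-disjointness input used in the proof of Proposition 3.3 still applies at the level of $\rho_n$. Let $K_{\zeta_i}$, $K_{f_j}$ denote the extensions of $K$ cut out by the restrictions of these classes to $G_K$. As in Proposition 3.3(b) these extensions are linearly disjoint over $K$, so by Chebotarev I can pick, one at a time, $\rho_n$-nice primes $q_i \notin R \cup S$ such that $\zeta_i|_{G_{q_i}} \neq 0$, $\zeta_k|_{G_{q_i}} = 0$ for $k \neq i$, and every $f_j$ restricts to zero at $q_i$. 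Set $Q = \{q_1, \ldots, q_s\}$.

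For the first isomorphism, the restriction map $H^1(G_{S \cup R}, X^*) \to \oplus_{v \in Q} H^1(G_v, X^*)$ is injective by design, with image of dimension $s$; enlarging the ramification set from $S \cup R$ to $S \cup R \cup Q$ enlarges the source by exactly $|Q| = s$ (the analogue of Proposition 3.3(c) for $X^*$, using $\dim H^2(G_q, X^*) = 1$ at a nice prime by Proposition 3.2), giving total dimension $2s = \sum_{v \in Q} \dim H^1(G_v, X^*)$, so the map is bijective. For the second isomorphism, apply Proposition 3.1 to the Selmer system with $L_v = H^1(G_v, X)$ on $S$, $L_v = H^1_{nr}(G_v, X)$ on $R$, and $L_v = 0$ on $Q$: the associated dual Selmer group is exactly the kernel of the first isomorphism and so vanishes, and Wiles' dimension identity then forces surjectivity of the map in (2); injectivity is automatic since a class in the kernel would lie in $\Sha^1_{S \cup R \cup Q}(X)$, which is zero because the $\Sha^1_S(X) = 0$ hypothesis propagates under the addition of nice primes (Poitou--Tate duality plus the first isomorphism).

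The main obstacle is verifying that every prime $q_i$ can be made to satisfy all of the imposed conditions at once: $\rho_n$-niceness (a Chebotarev condition over $K$), prescribed (non)vanishing of the $\zeta_k$ (conditions in $Gal(K_{\zeta_k}/K)$), and vanishing of every $f_j$ (conditions in $Gal(K_{f_j}/K)$). Linear disjointness of these extensions over $K$ is what makes the conjunction realizable, and establishing that disjointness is the technical heart of the argument. The hypotheses $p \geq 5$ and $Im \overline{\rho} \supseteq SL_2(\mathbb{F}_p)$ drive this disjointness exactly as in Proposition 3.3(a), and the auxiliary set $R$ introduces no new difficulty beyond a bookkeeping enlargement of the field $K$.
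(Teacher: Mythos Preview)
The paper does not actually prove this lemma: it is simply quoted as ``Lemma 8 from \cite{KLR}'' and then used. So there is no in-paper proof to compare against; what you have written is essentially a reconstruction of the \cite{KLR} argument in the language of Section~3 of this paper, and the overall architecture (Chebotarev to pick nice primes detecting a basis of the dual side, then Greenberg--Wiles/Poitou--Tate to transfer to the $X$-side) is the right one and matches the method of Proposition~3.3(b).

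Two points in your sketch need repair. First, with the Selmer conditions you wrote ($L_v=H^1$ on $S$, $L_v=H^1_{nr}$ on $R$, $L_v=0$ on $Q$), the dual conditions are $L_v^{\perp}=0$ on $S$, $L_v^{\perp}=H^1_{nr}$ on $R$, $L_v^{\perp}=H^1$ on $Q$; the resulting dual Selmer group is \emph{not} the kernel of the first map. The Selmer system whose dual is the kernel of (1) is $\mathcal{N}_s=0$, $\mathcal{N}_r=0$, $\mathcal{N}_q=H^1$, and $H^1_{\mathcal{N}}$ is the kernel of (2), not its source. Second, your injectivity argument (``a class in the kernel would lie in $\Sha^1_{S\cup R\cup Q}(X)$'') is incomplete: an $f\in H^1(G_{S\cup Q},X)$ vanishing on $S\cup R$ need not vanish at $Q$, so it is not obviously in $\Sha^1$. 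The clean route is the one you gesture at for surjectivity: once (1) is an isomorphism, Poitou--Tate lets you solve the local conditions at $S\cup R$ by adjusting freely at $Q$ (this gives surjectivity of (2) directly), and then a dimension count using Proposition~3.1 with the full/unramified/full conditions on $S$, $R$, $Q$ shows source and target of (2) have equal dimension, whence injectivity. Your extra Chebotarev condition that every $f_j$ vanishes at each $q_i$ is exactly what makes that dimension count go through, so keep it, but rewire the Selmer bookkeeping accordingly.
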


The above lemma allows us to find a global cohomology class such that its action on a deformation overcomes obstructions at primes of $S$ and allows us to choose the characteristic polynomials at $R$ as we want. Now we might have added local obstructions at the primes of $Q$. To overcome these obstructions we use Prop 3.5 to add a set $A$ of one or two nice primes and a global cohomology class $h$ which overcomes local obstructions at primes of $Q$, leaves the characteristic polynomials at $R$ unchanged, and doesn't add any new obstructions at $A$.

We now state the main proposition (based on \cite{KLR}), which we use to build the required representations. The subtle difference here is the use of the local condition property to find the set of primes $A$ and one global cohomology class $h$, rather than working individually with each local condition.

\begin{proposition}

There exists a deformation $\rho = \underleftarrow{lim} \rho_n$ of $\overline{\rho}$, which is ramified at infinitely many primes such that the characteristic polynomials of each $\rho_n$ can be chosen arbitrarily at the set of ramified primes.

\end{proposition}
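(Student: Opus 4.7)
The plan is an inductive construction of a compatible tower $\rho_n : G \to GL_2(\mathbb{Z}/p^n)$, each ramified on a finite set $S_n$ with $S_1 \subsetneq S_2 \subsetneq \cdots$, such that $\rho := \varprojlim \rho_n$ is the desired representation. Fix in advance a countable pool $R$ of unramified primes and, for each $r \in R$, a target characteristic polynomial congruent to that of $\overline{\rho}(\sigma_r)$ modulo $p$. At stage $n$ we choose a finite subset $R_n \subset R$ with $|R_n| \to \infty$ and arrange that $\rho_n$ realises the prescribed characteristic polynomial at every prime of $R_n$. The base case $\rho_1 = \overline{\rho}$ is given.

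For the inductive step, assume $\rho_n$ has been built with $\Sha^1_{S_n}(X) = \Sha^2_{S_n}(X) = 0$ and the prescribed characteristic polynomials at $R_n$. Form any naive lift $\widetilde{\rho}_{n+1}$ to $GL_2(\mathbb{Z}/p^{n+1})$. Since $\Sha^2_{S_n}(X)=0$, all obstructions to lifting are local and live in $\oplus_{s \in S_n} H^2(G_s, X)$; moreover, at the enlarged set $R_{n+1} = R_n \cup R'$ the characteristic polynomials of $\widetilde{\rho}_{n+1}$ may disagree with the prescribed ones. Choose local classes $(z_s)_{s \in S_n} \in \oplus_s H^1(G_s, X)$ whose twisting action kills these obstructions, and $(z_r)_{r \in R_{n+1}} \in \oplus_r H^1_{nr}(G_r, X)$ whose twisting action shifts the Frobenius traces to the desired values.

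Now apply Lemma 5.1 to find a finite set $Q_n$ of $\rho_n$-nice primes, disjoint from $S_n \cup R_{n+1}$, together with a single global class $f \in H^1(G_{S_n \cup Q_n}, X)$ whose restriction realises both $(z_s)_{s \in S_n}$ and $(z_r)_{r \in R_{n+1}}$ simultaneously. Replace $\widetilde{\rho}_{n+1}$ by $(I + p^n f)\widetilde{\rho}_{n+1}$: this kills the obstructions at $S_n$ and installs the prescribed characteristic polynomials at $R_{n+1}$, at the cost of possibly introducing new local obstructions at the newly ramified primes of $Q_n$. To absorb these, invoke Prop 3.5 to obtain a set $A_n$ of one or two $\rho_n$-nice primes and a single global class $h \in H^1(G_{F, S_n \cup Q_n \cup A_n}, X)$ solving the local condition problem for the obstructions at $Q_n$, and with $h|_{G_{\mathfrak{q}}} \in N_{\mathfrak{q}}$ for $\mathfrak{q} \in A_n$, so that no new obstructions arise at $A_n$ and the class $C_{\mathfrak{q}}$ of deformations is preserved there. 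Because $h$ is unramified at primes of $R_{n+1}$ and can be arranged so that its restrictions there lie in the trivial line of $H^1_{nr}$ (or are killed outright), the characteristic polynomials at $R_{n+1}$ remain undisturbed. Set $\rho_{n+1} := (I + p^n h)\widetilde{\rho}_{n+1}$ and $S_{n+1} := S_n \cup Q_n \cup A_n$; then $\Sha^1_{S_{n+1}}(X) = \Sha^2_{S_{n+1}}(X) = 0$ by Prop 3.3(b), closing the induction.

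Since $Q_n$ is nonempty at every stage, $S_n \subsetneq S_{n+1}$ and $\bigcup_n S_n$ is infinite, so $\rho = \varprojlim \rho_n$ is ramified at infinitely many primes, and its characteristic polynomials at each prime of $\bigcup_n R_n$ are the prescribed ones. The principal difficulty is the simultaneous juggling at each step of three competing demands: killing the local obstructions inherited from the previous stage, enforcing the prescribed Frobenius traces at the growing set $R_n$, and controlling the auxiliary primes added in the process so they do not cascade into an unmanageable sequence of new obstructions. The crucial ingredient making the induction close is Prop 3.5, which bundles all remaining local obstructions into a single global cohomology class supported at only one or two additional nice primes — so the carefully arranged data at $R_{n+1}$ is not perturbed when those obstructions are absorbed.
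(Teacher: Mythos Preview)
Your proposal is correct and follows essentially the same inductive scheme as the paper: lift, apply Lemma~5.1 to find $Q_n$ and a global class $f$ realising the local targets at $S_n$ and $R_{n+1}$, then apply Prop~3.5 to find $A_n$ and a second global class $h$ absorbing the new obstructions at $Q_n$, and set $S_{n+1}=S_n\cup Q_n\cup A_n$.

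One point deserves tightening. You place $h\in H^1(G_{F,S_n\cup Q_n\cup A_n},X)$ and then argue that, being unramified at $R_{n+1}$, it ``can be arranged'' to leave the characteristic polynomials there untouched. Unramified alone is not enough: an unramified class lands in $H^1_{nr}(G_r,X)$ but can still move the Frobenius trace. The paper handles this by enlarging the set $T$ in Prop~3.5 to include $R_{n+1}$ and imposing $z_r=0$ there, so that $h\in H^1(G_{F,S_n\cup Q_n\cup R_{n+1}\cup A_n},X)$ with $h|_{G_r}=0$ for $r\in R_{n+1}$. With that adjustment your argument matches the paper's. A second, smaller point: the paper also fixes target $\mathbb{Z}_p$-deformations $\rho^{S_n}$ at the ramified primes (not merely ``kills obstructions''), which is what guarantees local liftability at every subsequent stage and hence the existence of your naive lift $\widetilde{\rho}_{n+1}$; this should be part of the inductive hypothesis rather than left implicit.
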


\begin{proof}

We assume that $S$ is large enough so that $\Sha^1_S(X)$ and $\Sha^2_S(X)$ are trivial.We  We proceed to work inductively, and build $\rho$ from $\rho_n : G_{S_n} \rightarrow GL_2 (\mathbb{Z}/p^n)$.

$\mathbf{n=2:}$

Let $S_2 = S$. As $\Sha_S^2(X)= 0$, we see that there are no global obstructions to lifting $\overline{\rho}$ to $GL_2(\mathbb{Z}/p^2)$, so we let $\rho_2 : G_S \rightarrow GL_2(\mathbb{Z}/p^2)$ be any deformation of $\overline{\rho}$. We choose any deformation of $\overline{\rho} $ to $GL_2(\mathbb{Z}_p)$, and call it $\rho^{S_2}$. Let $R_2$ be a finite set of primes for which we choose the characteristic polynomials consistent with those of $\overline{\rho}$. By Lemma 5.1, we can find a set $Q_2$ for which there is a unique $f_2 \in H^1(G_{S_2 \cup Q_2}, X)$, such that $(I + p f_2)\rho_2|_{G_{v}} \equiv \rho^{S_2}|_{G_{v}} \mod p^2$. As $f_2|_{G_{r}} \in H^1_{nr}(G_r, X)$ for $r \in R_2$, we can see that the characteristic polynomials of this new representation remain the same for the primes in $R_2$. By adding the set $Q_2$ we may have some new obstructions. To overcome the obstructions we use Prop 3.5 to add a set $A_2$ to the ramification set and choose $h_2 \in H^1(G_{S_2 \cup Q_2 \cup R_2 \cup A_2})$ such that $(I + p(f_2 + h_2))\rho_2$ is unobstructed at the new primes, and still has the same characteristic polynomials for $R_2$.

$\mathbf{n=3:}$

Since adding primes to the set of ramification doesn't add new global obstructions ($\Sha^2_{S_2 \cup Q_2} = 0$), we can lift $(I + p(f_2 + h_2))\rho_2$ to a representation $\rho_3 : G_{S_3} \rightarrow GL_2(\mathbb{Z}_p)$, where $S_3 = S_2 \cup Q_2 \cup A_2$. For the set $S_3 \backslash S_2$, choose a deformation $\rho^{S_3 \backslash S_2}$ to $GL_2 (\mathbb{Z}_p)$, and also fix characteristic polynomials for the set $R_3 \backslash R_2$. As in the case n=2, we can add two sets of primes $Q_3, A_3$ to the ramification set and get $f_3 \in H^1(G_{S_3 \cup Q_3}, X)$ and $h_3 \in H^1(G_{S_3 \cup Q_3 \cup R_3 \cup A_3}, X)$, such that $(I + p^2(f_3 + h_3))\rho_3 \equiv  \rho \mod p^3$, when restricted to the primes $v \in S_3$. Also, $(I + p^2(f_3 + h_3))\rho_3$ is unobstructed at the new primes we have added and has the same characteristic polynomials at primes in $R_3$.

$\mathbf{n=4}$

Since there are no obstructions to lifting, we lift $(I + p^2(f_3 + h_3))\rho_3$ to $\rho_4$, and continue the same argument. Hence, we get representations $\rho_n : G_{S_n} \rightarrow GL_2(\mathbb{Z}/p^n)$, which are unramified away from $S_n$, and have fixed characteristic polynomials for primes in $R_n$. Let $\rho = \underleftarrow{lim} \rho_n$, and $R = \underrightarrow{lim} R_n$. By a result of Khare-Rajan \cite{KR}, we know that the set $R$ of unramified primes is of density one.

\end{proof}

We now use this construction of $\rho$ to prove our results about the distributions of the $a_l$'s. All we have to do is choose the set of unramified primes $R_n$ carefully, as well as bounds for it (since it's a finite set of primes) to get our following results.

\begin{corollary}
For any $\epsilon \leq 1$, there exists a deformation $\rho : G \rightarrow GL_2 (\mathbb{Z}_p)$ ramified at infinitely many primes, such that the set $R = \{ l | \frac{a_l}{2\sqrt{l}} \leq \epsilon \}$ is of density one.
\end{corollary}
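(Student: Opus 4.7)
The plan is to invoke Proposition 5.2 and exploit the freedom it provides to prescribe the characteristic polynomials on the density-one set $R = \bigcup_n R_n$, forcing each trace $a_l(\rho)$ to be a rational integer of small absolute value. Since determinants are held fixed throughout the paper, $\det \rho = \chi_{\text{cyc}}$; in particular, for any $l$ unramified in $\rho$ the characteristic polynomial of $\rho(\sigma_l)$ has the form $X^2 - a_l(\rho) X + l$, so controlling the characteristic polynomial is the same as controlling $a_l(\rho)$.

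The first step is to fix a threshold $L_0 = \lceil (p/(2\epsilon))^2 \rceil$. For any prime $l > L_0$, the real interval $[-2\epsilon\sqrt{l},\, 2\epsilon\sqrt{l}]$ has length greater than $p$, hence meets every residue class modulo $p$ in at least one rational integer.

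The second step is to run the inductive construction of Proposition 5.2, making a deliberate choice each time a new prime enters $R_n$. Whenever a prime $l > L_0$ first appears in $R_{n_l}$ at some stage $n_l$, select a rational integer $a_l \in \mathbb{Z}$ satisfying $a_l \equiv \operatorname{tr}\overline\rho(\sigma_l) \pmod{p}$ and $|a_l| \leq 2\epsilon\sqrt{l}$, and prescribe the characteristic polynomial of $\rho_{n_l}(\sigma_l)$ to be the reduction of $X^2 - a_l X + l$ modulo $p^{n_l}$. At every subsequent stage $m > n_l$, reuse the same rational integer $a_l$ (now reduced mod $p^m$), so the characteristic polynomials stay compatible across stages as required by Proposition 5.2. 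For the finitely many primes $l \leq L_0$ that might enter some $R_n$, any compatible choice is acceptable.

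Finally, pass to the limit $\rho = \varprojlim \rho_n$. For every $l \in R$ with $l > L_0$, the trace $a_l(\rho)$ equals the fixed rational integer $a_l$, and hence $|a_l(\rho)/(2\sqrt{l})| \leq \epsilon$. The Khare-Rajan result cited in Proposition 5.2 says that $R$ has natural density one, and discarding the finitely many primes $\leq L_0$ preserves this density, completing the argument. The main obstacle has already been absorbed into Proposition 5.2, whose cohomological apparatus (Lemma 5.1 together with Propositions 3.4-3.7) delivers the necessary control of characteristic polynomials at each prime of $R_n$ without introducing new obstructions; the only subtle residual point here is committing in advance to a single rational integer $a_l$ for each $l$, so that the $p$-adic limit of the mod-$p^m$ traces lies in $\mathbb{Z}$ rather than being a generic element of $\mathbb{Z}_p$.
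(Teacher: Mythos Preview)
Your overall strategy matches the paper's: invoke Proposition~5.2 and, for each prime $l$ in $R=\bigcup_n R_n$, commit to a fixed rational integer $a_l$ with $|a_l|\le 2\epsilon\sqrt{l}$ so that the $p$-adic limit of the traces lands in $\mathbb{Z}$. However, there is a genuine gap in your execution.

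The issue is the compatibility constraint at the stage where $l$ first enters. In the inductive mechanism underlying Proposition~5.2, at stage $n$ one modifies $\rho_n$ by a cocycle of the form $(I+p^{\,n-1}f)$; this can alter $\operatorname{tr}\rho_n(\sigma_l)$ only by a multiple of $p^{\,n-1}$. Hence if $l$ first enters $R_{n_l}$ with $n_l>2$, the value $a_l(\rho_{n_l-1})\bmod p^{\,n_l-1}$ is already fixed (and uncontrolled), and any target integer $a_l$ must satisfy $a_l\equiv a_l(\rho_{n_l-1})\pmod{p^{\,n_l-1}}$, not merely $a_l\equiv \operatorname{tr}\overline{\rho}(\sigma_l)\pmod p$. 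Your threshold $L_0=\lceil (p/(2\epsilon))^2\rceil$ guarantees only that $[-2\epsilon\sqrt{l},\,2\epsilon\sqrt{l}]$ meets every residue class mod $p$, which is insufficient once $n_l>2$.

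The paper closes exactly this gap by coupling the stage of entry to the size of $l$: it puts the primes with $(\tfrac{c}{2}p^{n})^2<l<(\tfrac{c}{2}p^{n+1})^2$ (where $c=1/\epsilon$) into $R$ at stage $n$. For such $l$ one has $4\epsilon\sqrt{l}>2p^{n}$, so the interval $[-2\epsilon\sqrt{l},\,2\epsilon\sqrt{l}]$ contains an integer in \emph{every} residue class mod $p^{n}$, in particular in the class of $a_l(\rho_{n-1})$. Your argument is easily repaired by adopting this scheduling (or any rule ensuring $p^{\,n_l-1}<4\epsilon\sqrt{l}$), but as written the choice of $a_l$ need not be achievable.
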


\begin{proof}
At each stage $n$, we define $R_n = \{ l, (\frac{c}{2}p^n)^2 < l < (\frac{c}{2}p^{n+1})^2 | a_l^2 - \frac{4}{c^2} l < 0  \} $, where $c$ is an arbitrary constant. Since each $a_l$ is determined $\mod p^n$, this is possible. The Sato-Tate formula tells us that $\frac{a_l}{2\sqrt{l}}$ lies in the interval $[-1,1]$, and they follow a particular distribution. By choosing $c > 1$, we are shrinking the distribution between $[\frac{-1}{c},\frac{1}{c}]$. Now, let $\epsilon = \frac{1}{c}$. As we choose our $c$ to be bigger (or $\epsilon$ to be smaller) the set of unramified primes $R_n$ also gets bigger but stays finite, which allows us to use the methods above to get the set $R = \underrightarrow{lim} R_n$.
\end{proof}

As is evident, the larger the value of $c$, the smaller the interval at which the $\alpha_l = \frac{a_l}{2\sqrt{l}}$ lie. Hence we get a probability distribution for the $\alpha_l$'s which is contrary to the Sato-Tate conjecture.

\begin{corollary}
If $D \in \mathbb{Z}$ is fixed, we can find a deformation $\rho : G \rightarrow GL_2 (\mathbb{Z}_p)$, such that the set $R(x) = \{ l < x | a_l = D \} \ne O(\frac{\sqrt{x}}{log x})$ \end{corollary}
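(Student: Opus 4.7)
The plan is a direct application of Proposition 5.2. That proposition produces a deformation $\rho = \underleftarrow{lim}\, \rho_n$ with prescribed (and compatible) characteristic polynomials on an expanding sequence $R_n$ of unramified primes; by the Khare--Rajan result cited there, the limit set $R = \bigcup_n R_n$ has density one among the primes at which such a prescription was available. So the task reduces to choosing the $R_n$ and the prescribed characteristic polynomials so that $a_l(\rho) = D$ for every $l \in R$, and then comparing $|R(x)|$ with $\sqrt{x}/\log x$.

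First I would identify the primes where it is legitimate to demand $a_l(\rho) = D$. The characteristic polynomial of $\rho_n(\sigma_l)$ must reduce modulo $p$ to that of $\overline{\rho}(\sigma_l)$, so forcing the trace to be $D \in \mathbb{Z}_p$ requires $a_l(\overline{\rho}) \equiv D \pmod p$. Let $T_D$ denote the set of such primes. Because $\overline{\rho}$ is surjective onto $GL_2(\mathbb{F}_p)$, the trace-$D$ conjugacy classes in $GL_2(\mathbb{F}_p)$ are nonempty, and Chebotarev yields $\mathrm{dens}(T_D) > 0$. At stage $n$ I would take $R_n$ to be the set of primes in $T_D$ below an increasing bound $N_n \to \infty$, and prescribe $\rho_n(\sigma_l)$ to have characteristic polynomial $t^2 - D t + \det \rho_n(\sigma_l)$ for each $l \in R_n$; this is a valid lift of the characteristic polynomial of $\overline{\rho}(\sigma_l)$ by construction of $T_D$. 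In the limit, $a_l(\rho) = D$ in $\mathbb{Z}_p$ for every $l \in R$, and by Khare--Rajan applied within $T_D$ the set $R$ has positive density. Consequently $|\{l < x : a_l(\rho) = D\}| \geq |R(x)| \gg x/\log x$, which is certainly not $O(\sqrt{x}/\log x)$, violating the Lang--Trotter bound.

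The only step requiring any care is checking that the Khare--Rajan density argument survives the restriction $R_n \subset T_D$. This is routine, since the construction of Proposition 5.2 only uses the existence of a compatible local lift of the characteristic polynomial at each prescribed prime, and such a lift exists at every prime of $T_D$ by the residual compatibility. A quantitative improvement beyond $x/\log x$ is unnecessary for the stated corollary: positive density of $R$ alone defeats the Lang--Trotter growth rate $O(\sqrt{x}/\log x)$.
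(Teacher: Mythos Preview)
Your argument has a genuine gap at the compatibility step. Proposition~5.2 builds $\rho_n$ from $\rho_{n-1}$ by twisting a chosen lift by a cocycle via $(I+p^{n-1}f)$; thus at stage $n$ you can only adjust the $p^{n-1}$-digit of each trace. Consequently, for a prime $l$ entering $R_n$ for the first time at stage $n$, the prescription $a_l(\rho_n)\equiv D\pmod{p^n}$ is possible only if $a_l(\rho_{n-1})\equiv D\pmod{p^{n-1}}$ already holds. Membership in $T_D$ guarantees only the mod~$p$ congruence, so for $n\ge 3$ your choice $R_n = T_D\cap[1,N_n]$ is \emph{not} a set on which the required prescription can be made; your sentence ``this is a valid lift of the characteristic polynomial of $\overline{\rho}(\sigma_l)$'' checks compatibility with $\overline{\rho}$, not with $\rho_{n-1}$, which is what Proposition~5.2 actually needs.

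This also undermines your density claim. The Khare--Rajan theorem only says the ramified set has density zero; it does not say that $R$ exhausts $T_D$ up to a density-zero set, and in fact it cannot: with this method the primes newly available at stage $n$ form the Chebotarev set $\{l: a_l(\rho_{n-1})\equiv D \pmod{p^{n-1}}\}$, whose density shrinks like $p^{-(n-2)}$, so the limit set $R$ has density zero. The paper's proof accepts this and proceeds differently: it sets $R_n=\{l<x_n: a_l\equiv D\pmod{p^n}\}$, notes that this is a Chebotarev set of positive (but $n$-dependent) density, and chooses $x_n$ large enough that $|R_n|>x_n^{1-2/n}/\log x_n$. Since $R_n\subset R$, one gets $|R(x_n)|>x_n^{1-2/n}/\log x_n$ along the sequence $x_n$, which already rules out $|R(x)|=O(\sqrt{x}/\log x)$. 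Your stronger conclusion of positive density is not available, but the weaker estimate suffices for the corollary.
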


\begin{proof}
We define $R_n = \{ l < x_n | a_l \equiv D \mod p^n\}$, where we choose $x_n$ to be large enough such that $\# R_n > \frac{x_n^{1-2/n}}{log (x_n)}$. As each $R_n$ is a Cebotarev set, we can always find a large enough $x_n$. At the $(n+1)$th stage, the $x_{n+1}$ will be substantially bigger than $x_n$, but these sets have positive density so we can keep choosing the $x_n$'s to be large enough. When we take limits $R = \{l  |  a_l = D \} = \underrightarrow{lim} R_n$, is a set of zero density, but $\# R(x) = O(\frac{x^{1- \epsilon}}{log x})$, where $0 < \epsilon < 1$.
\end{proof}

The above corollary gives us an asymptotic formula contrary to the Lang-Trotter conjecture. The following corollaries will give contrary asymptotic formulae for the distribution of supersingular and anomalous primes.

\begin{corollary}
There exists a deformation $\rho : G \rightarrow GL_2 (\mathbb{Z}_p)$, such that the set of supersingular primes, $R = \{l <x | a_l = 0 \} \ne O(x^{3/4})$.  \end{corollary}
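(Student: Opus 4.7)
The plan is to mimic the construction used in Corollary 5.3, but now tailored so that the number of supersingular primes below $x$ outgrows $x^{3/4}$. Concretely, I will apply Proposition 5.2 with a carefully chosen sequence of finite sets $R_n$ of unramified primes at which the characteristic polynomials of $\rho_n$ are prescribed to force $a_l \equiv 0 \pmod{p^n}$.

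At the $n$-th stage of the induction in Proposition 5.2, I would choose $x_n$ and set
\[
R_n = \{\, l < x_n \;:\; a_l(\rho_{n-1}) \equiv 0 \pmod{p^{n-1}} \text{ and we impose } a_l(\rho_n) \equiv 0 \pmod{p^n}\,\},
\]
and then impose via Proposition 5.2 the characteristic polynomial at $l \in R_n$ to be the one with $a_l \equiv 0 \pmod{p^n}$ (consistent with the previous stage). The condition ``$a_l \equiv 0 \pmod{p^n}$'' is a Chebotarev condition on the Frobenius in the finite extension cut out by $\rho_n$, and since the trace-zero elements form a positive-density subset of the image (the image contains $SL_2$, so traces are surjective modulo $p^n$ in the appropriate sense), the Chebotarev density theorem guarantees that $\{l : a_l \equiv 0 \pmod{p^n}\}$ has positive density. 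I then pick $x_n$ large enough that $\#R_n > x_n^{3/4}\cdot (\log x_n)$, say; this is possible because $\#\{l < x : a_l \equiv 0 \pmod{p^n}\} \sim c_n \cdot x / \log x$ with $c_n > 0$, and $x/\log x$ dominates $x^{3/4}\log x$ for $x$ large. I also choose $x_{n+1} > x_n$ so the sets nest.

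Having fixed $R_n$, I invoke Proposition 5.2 (and the auxiliary Lemma 5.1 together with Proposition 3.5) to lift $\rho_{n-1}$ to $\rho_n$ with the prescribed characteristic polynomials at $R_n$, absorbing any new obstructions at the auxiliary nice primes $Q_n$ through the set $A_n$ and the cohomology class $h_n$ supplied by the local condition property. Passing to the limit $\rho = \varprojlim \rho_n$ gives a deformation ramified at infinitely many primes with $a_l(\rho) = 0$ for every $l \in R = \varinjlim R_n$. For any $x \geq x_n$ we then have
\[
\#\{l < x : a_l(\rho) = 0\} \geq \#R_n > x_n^{3/4}\cdot \log x_n,
\]
so along the sequence $x = x_n$ the count exceeds every constant multiple of $x^{3/4}$, meaning $\#\{l<x : a_l(\rho)=0\}$ is not $O(x^{3/4})$.

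The main obstacle is ensuring that at each stage one can genuinely prescribe the characteristic polynomials at a finite but increasingly large set $R_n$ without disturbing the prescriptions made at earlier stages. This is precisely what Lemma 5.1 delivers: the second isomorphism there decomposes $H^1(G_{S \cup Q}, X)$ as a direct sum over $S$ (allowing us to adjust at ramified primes) and over $R$ (through the unramified local condition $H^1_{nr}(G_r,X)$, which changes the characteristic polynomial at $r$ without changing ramification). Thus the characteristic polynomial at $r \in R_n$ can be nudged inside the coset $a_l \equiv 0 \pmod{p^n}$ while leaving the previously chosen values at $R_{n-1}$ intact modulo $p^{n-1}$. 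The Chebotarev-based lower bound on $\#R_n$, combined with freedom to enlarge $x_n$ as needed, then yields the desired violation of the $O(x^{3/4})$ estimate in the limit.
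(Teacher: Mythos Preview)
Your argument is correct and is essentially the paper's approach: the paper simply invokes the preceding Lang--Trotter corollary with $D=0$ and exponent parameter $\epsilon<1/4$, whereas you rerun that same construction directly for $D=0$ with the explicit bound $\#R_n>x_n^{3/4}\log x_n$. The substance---choosing $R_n$ as a Chebotarev set of primes with $a_l\equiv 0\pmod{p^n}$, enlarging $x_n$ so that the count beats $x_n^{3/4}$, and invoking Proposition~5.2 to realize the prescribed characteristic polynomials---is identical.
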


\begin{proof}
We choose $D = 0$ and let $\epsilon < 1/4$ in the previous corollary, and see by straightforward computations that $\# R = \{ l < x | a_l = 0 \} > O ( x^{3/4})$.
\end{proof}

\begin{corollary}
There exists a deformation $\rho : G \rightarrow GL_2 (\mathbb{Z}_p)$, such that the set of anomalous primes, $R = \{l <x | a_l = 1 \} \ne (\frac{x}{(log x)^{1+\epsilon}})$, where $\epsilon < 1/3$.\end{corollary}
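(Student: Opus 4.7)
The plan is to imitate the proof of Corollary 5.4, replacing $D = 0$ by $D = 1$ and choosing the sets $R_n$ aggressively enough to outgrow Serre's stronger upper bound for anomalous primes. Concretely, I would apply Proposition 5.2 with the following choices: at the $n$th stage set
\[
R_n = \{\, l < x_n : a_l \equiv 1 \pmod{p^n} \,\},
\]
where $x_n$ is to be chosen large enough to meet the required cardinality threshold.

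The existence of enough such primes follows from Chebotarev. Since the residual $\overline{\rho}$ arises from a semistable elliptic curve and is surjective, each $\rho_n$ has image containing a large subgroup of $GL_2(\mathbb{Z}/p^n)$, and the union $C_n$ of conjugacy classes in this image with trace $\equiv 1 \pmod{p^n}$ is nonempty. By effective Chebotarev,
\[
\#\{\, l < x : a_l \equiv 1 \pmod{p^n} \,\} \sim c_n\, \frac{x}{\log x}
\]
for some positive $c_n > 0$. I would then pick $x_n$ large enough that $\#R_n > x_n/(\log x_n)^{1+\epsilon}$ for the fixed $\epsilon < 1/3$. Proposition 5.2 lets us prescribe the characteristic polynomials of $\rho_n$ at primes in $R_n$ consistently across stages, so $a_l(\rho_n) \equiv 1 \pmod{p^n}$ for all $l \in R_n$.

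Taking $\rho = \varprojlim \rho_n$, the set $R = \{\, l : a_l(\rho) = 1 \,\}$ contains $\bigcup_n R_n$, so $\#R(x_n) > x_n/(\log x_n)^{1+\epsilon}$ along the sequence $\{x_n\}$, which is incompatible with any upper bound of the shape $O(x/(\log x)^{1+\epsilon})$.

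The main technical point to verify is the quantitative balance: the density $c_n$ decays (roughly as $p^{-n}$, since matrices of trace $\equiv 1$ account for about $1/p^n$ of $GL_2(\mathbb{Z}/p^n)$), so $x_n$ must grow rapidly in $n$ (essentially doubly exponentially) for $c_n x_n/\log x_n$ to dominate $x_n/(\log x_n)^{1+\epsilon}$. Since $x_n$ is free to be chosen, this is a matter of bookkeeping rather than a substantive obstruction, and the numerology matches the threshold $\epsilon < 1/3$ appearing in Serre's estimate.
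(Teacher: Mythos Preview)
Your proposal is correct and follows essentially the same route as the paper: set $D=1$, define $R_n=\{l<x_n:\ a_l\equiv 1\pmod{p^n}\}$, use Cebotarev to pick $x_n$ large enough, and invoke Proposition~5.2. The only cosmetic difference is the threshold: the paper takes $\#R_n>x_n/(\log x_n)^{1+1/2^n}$ with exponent tending to $1$, whereas you fix $\epsilon$ from the outset; both choices work, and your closing remark that ``the numerology matches the threshold $\epsilon<1/3$'' is unnecessary, since the construction beats $x/(\log x)^{1+\epsilon}$ for any $\epsilon>0$.
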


\begin{proof}
We choose $D = 1$, and define  $R_n = \{ l < x_n | a_l \equiv 1 \mod p^n\}$, where $x_n$ is large enough so that $\# R_n  > \frac{x_n}{log (x_n)^{1 + 1/2^n}}$. As we take limits, we see that $\# R > O (\frac{x}{(log x)^{1+\epsilon}})$
\end{proof}

\section{Concluding remarks}

It would be interesting to know if it is possible to manipulate the deformations so that we can get any distribution we want for the $a_l$'s. The main problem we encountered in using the above methods was trying to control infinitely many values for the $a_l$'s with only finitely many cocycles at each stage.

$\bf{Acknowledgements}$: The author would like to thank Ravi Ramakrishna for suggesting the problem and also for many useful conversations and comments about this paper. Finally, the Cornell mathematics department for the visiting position for the academic years of 2007 and 2008, during which the majority of the work on this paper was done.

Aftab Pande\\
Universidade Federal do Rio de Janeiro\\
Email: aftab.pande@gmail.com\\

\end{document}